\documentclass{scrartcl}

\usepackage[utf8]{luainputenc}
\usepackage[USenglish]{babel}
\usepackage{csquotes}

\usepackage[a4paper,top=27mm,bottom=20mm,inner=25mm,outer=20mm]{geometry}
\usepackage[plainpages=false,pdfpagelabels,hidelinks,unicode]{hyperref}

\usepackage[%
  backend=bibtex,bibencoding=ascii,
  style=numeric-comp,
  giveninits=true, uniquename=init, 
  natbib=true,
  url=true,
  doi=true,
  isbn=false,
  backref=false,
  maxnames=99,
  ]{biblatex}
\addbibresource{references.bib}

\usepackage{amsmath}
\allowdisplaybreaks
\usepackage{amssymb}
\usepackage{commath}
\usepackage{mathtools}
\usepackage{bbm}
\usepackage{nicefrac}

\usepackage{siunitx}

\usepackage{amsthm}
\usepackage{thmtools}
\usepackage{etoolbox}
\makeatletter
\patchcmd{\thmt@setheadstyle}
 {\bgroup\thmt@space}
 {\thmt@space}
 {}{}
\patchcmd{\thmt@setheadstyle}
 {\egroup\fi}
 {\fi}
 {}{}
\makeatother
\declaretheoremstyle[
  bodyfont=\normalfont\itshape,
  headformat=\NAME\ \NUMBER\NOTE,
]{myplain}
\declaretheoremstyle[
  headformat=\NAME\ \NUMBER\NOTE,
]{mydefinition}
\newcommand{\envqed}{{\lower-0.3ex\hbox{$\triangleleft$}}}
\declaretheorem[style=myplain,numberwithin=section]{theorem}
\declaretheorem[style=myplain,numberlike=theorem]{lemma}

\declaretheorem[style=myplain,numberlike=theorem]{corollary}
\declaretheorem[style=mydefinition,numberlike=theorem,qed=\envqed]{definition}
\declaretheorem[style=mydefinition,numberlike=theorem,qed=\envqed]{remark}
\declaretheorem[style=mydefinition,numberlike=theorem,qed=\envqed]{example}

\usepackage{color}
\usepackage{graphicx}
\usepackage[small]{caption}
\usepackage{subcaption}

\begingroup\expandafter\expandafter\expandafter\endgroup
\expandafter\ifx\csname pdfsuppresswarningpagegroup\endcsname\relax
\else
  \pdfsuppresswarningpagegroup=1\relax
\fi

\usepackage{booktabs}
\usepackage{rotating}
\usepackage{multirow}

\usepackage{enumitem}

\usepackage{ifluatex}
\ifluatex
  \usepackage[no-math]{fontspec}
\else
  \usepackage[T1]{fontenc}
\fi
\usepackage{newpxtext,newpxmath}


\newcommand{\R}{\mathbb{R}}
\renewcommand{\C}{\mathbb{C}}
\newcommand{\scp}[2]{\left\langle{#1,\, #2}\right\rangle}
\newcommand{\I}{\operatorname{I}}
\newcommand{\id}{\operatorname{id}}
\newcommand{\Dinv}{J}
\newcommand{\mDinv}{\tilde{J}}
\newcommand{\F}{F}

\renewcommand{\Re}{\operatorname{Re}}

\DeclarePairedDelimiterX\newset[1]\lbrace\rbrace{\setaux #1||\endsetaux}
\def\setaux#1|#2|#3\endsetaux{\if\relax\detokenize{#2}\relax #1 \else #1 \;\delimsize\vert\; #2 \fi}
\renewcommand{\set}[1]{\newset*{#1}}

\let\epsilon\varepsilon
\let\phi\varphi
\let\rho\varrho

\newcommand{\kernel}{\operatorname{ker}}
\newcommand{\image}{\operatorname{im}}
\newcommand{\spann}{\operatorname{span}}

\newcommand{\dt}{\Delta t}
\renewcommand{\vec}[1]{\pmb{#1}}
\newcommand{\osc}{\vec{o}}

\newenvironment{keywords}{\par\textbf{Key words.}}{\par}
\newenvironment{AMS}{\par\textbf{AMS subject classification.}}{\par}

\title{A New Class of \texorpdfstring{$A$}{A} Stable Summation by Parts Time Integration Schemes with Strong Initial Conditions}
\author{Hendrik Ranocha, Jan Nordström}
\date{December 23, 2020}

\makeatletter
\hypersetup{pdfauthor={\@author}}
\hypersetup{pdftitle={\@title}}
\makeatother

\begin{document}

\maketitle

\begin{abstract}
  Since integration by parts is an important tool when deriving energy or entropy estimates for differential equations, one may conjecture that some form of summation by parts (SBP) property is involved in provably stable numerical methods. This article contributes to this topic by proposing a novel class of $A$ stable SBP time integration methods which can also be reformulated as implicit Runge-Kutta methods. In contrast to existing SBP time integration methods using simultaneous approximation terms to impose the initial condition weakly, the new schemes use a projection method to impose the initial condition strongly without destroying the SBP property. The new class of methods includes the classical Lobatto~IIIA collocation method, not previously formulated as an SBP scheme. Additionally, a related SBP scheme including the classical Lobatto~IIIB collocation method is developed.

\end{abstract}

\begin{keywords}
  summation by parts,
  Runge-Kutta methods,
  time integration schemes,
  energy stability,
  $A$~stability
\end{keywords}

\begin{AMS}
  65L06,  
  65L20,  
  65N06,  
  65M06,  
  65M12,  
  65N35,  
  65M70   
\end{AMS}

\section{Introduction}

Based on the fact that integration by parts plays a major role in the
development of energy and entropy estimates for initial boundary value
problems, one may conjecture that the
summation by parts (SBP) property \cite{svard2014review,fernandez2014review}
is a key factor in provably stable schemes.
Although it is complicated to formulate
such a conjecture mathematically, there are
several attempts to unify stable methods in the framework of
summation by parts schemes, starting from the origin of SBP operators in
finite difference methods \cite{kreiss1974finite,strand1994summation}
and ranging from finite volume \cite{nordstrom2001finite,nordstrom2003finite}
and discontinuous Galerkin methods \cite{gassner2013skew} to flux
reconstruction schemes \cite{ranocha2016summation}.

Turning to SBP methods in time \cite{nordstrom2013summation,lundquist2014sbp,boom2015high},
a class of linearly and nonlinearly stable SBP schemes has
been constructed and studied in this context, see also
\cite{ruggiu2018pseudo,ruggiu2020eigenvalue,linders2020properties}.
If the underlying quadrature is chosen as Radau or Lobatto quadrature,
these Runge-Kutta schemes are exactly the classical Radau~IA, Radau~IIA, and
Lobatto~IIIC methods \cite{ranocha2019some}.
Having the conjecture ``stability results require an SBP structure''
in mind, this article provides additional insights to this topic by
constructing new classes of SBP schemes, which reduce to the classical
Lobatto~IIIA and Lobatto~IIIB methods if that quadrature rule is used.
Consequently, all $A$ stable classical Runge-Kutta methods based on
Radau and Lobatto quadrature rules can be formulated in the framework of
SBP operators.
Notably, instead of using simultaneous approximation terms (SATs)
\cite{carpenter1994time,carpenter1999stable}
to impose initial conditions weakly, these new schemes use a strong
imposition of initial conditions in combination with a projection method
\cite{olsson1995summationI,olsson1995summationII,mattsson2018improved}.

By mimicking integration by parts at a discrete level, the stability of
SBP methods can be obtained in a straightforward way by mimicking the
continuous analysis.
All known SBP time integration methods are implicit and their
stability does not depend on the size of the time step.
In contrast, the stability
analysis of explicit time integration methods can use techniques
similar to summation by parts, but the analysis is in general more complicated
and restricted to sufficiently small time steps
\cite{ranocha2018L2stability,sun2017stability,sun2019strong}.
Since there are strict stability limitations for
explicit methods, especially for nonlinear problems
\cite{ranocha2020strong,ranocha2020energy},
an alternative to stable fully implicit methods
is to modify less expensive (explicit or not fully implicit)
time integration schemes to get the desired stability results
\cite{ketcheson2019relaxation,ranocha2020relaxation,ranocha2020general,ranocha2020relaxationHamiltonian,offner2018artificial,sun2019enforcing}.

This article is structured as follows. At first, the existing class of
SBP time integration methods is introduced in Section~\ref{sec:known-results},
including a description of the related stability properties.
Thereafter, the novel SBP time integration methods are
proposed in Section~\ref{sec:new-schemes}. Their stability
properties are studied and the relation to Runge-Kutta methods is
described. In particular, the Lobatto~IIIA and Lobatto~IIIB methods
are shown to be recovered using this framework.
Afterwards, results of numerical experiments demonstrating the
established stability properties are reported in
Section~\ref{sec:numerical-experiments}. Finally, the findings of this
article are summed up and discussed in Section~\ref{sec:summary}.

\section{Known Results for SBP Schemes}
\label{sec:known-results}

Consider an ordinary differential equation (ODE)
\begin{equation}
\label{eq:ode}
  \forall t \in (0,T)\colon \quad
  u'(t) = f(t, u(t)),
  \qquad
  u(0) = u_0,
\end{equation}
with solution $u$ in a Hilbert space.
Summation by parts schemes approximate the solution
on a finite grid $0 \le \tau_1 < \dots < \tau_s \le T$ pointwise as
$\vec{u}_i = u(\tau_i)$ and $\vec{f}_i = f(\tau_i, \vec{u}_i)$.
Although the grid does not need to be ordered for general SBP schemes, we impose
this restriction to simplify the presentation. (An unordered grid can always be
transformed into an ordered one by a permutation of the grid indices.)
The SBP operators can be defined as follows, cf.
\cite{svard2014review,fernandez2014review,fernandez2014generalized}.
\begin{definition}
  A first derivative SBP operator of order $p$ on $[0, T]$ consists of
  \begin{itemize}
    \item
    a discrete operator $D$ approximating the derivative
    $D \vec{u} \approx u'$ with order of accuracy $p$,

    \item
    a symmetric and positive definite discrete quadrature matrix $M$
    approximating the $L^2$ scalar product
    $\vec{u}^T M \vec{v} \approx \int_{0}^{T} u(\tau) v(\tau) \dif \tau$,

    \item
    and interpolation vectors $\vec{t}_L, \vec{t}_R$ approximating the
    boundary values as
    $\vec{t}_L^T \vec{u} \approx u(0)$, $\vec{t}_R^T \vec{u} \approx u(T)$
    with order of accuracy at least $p$, such that the SBP property
    \begin{equation}
    \label{eq:SBP}
      M D + (M D)^T = \vec{t}_R \vec{t}_R^T - \vec{t}_L \vec{t}_L^T
    \end{equation}
    holds.
  \end{itemize}
\end{definition}

\begin{remark}
  There are analogous definitions of SBP operators for second or higher
  order derivatives
  \cite{mattsson2004summation,mattsson2014diagonal,ranocha2020broad}.
  In this article, only first derivative SBP operators are considered.
\end{remark}

\begin{remark}
  The quadrature matrix $M$ is sometimes called norm matrix (since it
  induces a norm via a scalar product) or mass matrix (in a finite element
  context).
\end{remark}

Because of the SBP property \eqref{eq:SBP}, SBP operators
mimic integration by parts discretely via
\begin{equation}
\label{eq:SBP-IBP}
\begin{array}{*3{>{\displaystyle}c}}
  \underbrace{
    \vec{u}^T M (D \vec{v})
    + (D \vec{u})^T M \vec{v}
  }
  & = &
  \underbrace{
    (\vec{t}_R^T \vec{u})^T (\vec{t}_R^T \vec{v}) - (\vec{t}_L^T \vec{u})^T (\vec{t}_L^T \vec{v}),
  }
  \\
  \rotatebox{90}{$\!\approx\;$}
  &&
  \rotatebox{90}{$\!\!\approx\;$}
  \\
  \overbrace{
    \int_{0}^{T} u(\tau) \, v'(\tau) \dif \tau
    + \int_{0}^{T} u'(\tau) \, v(\tau) \dif \tau
  }
  & = &
  \overbrace{
    u(T) v(T) - u(0) v(0)
  }.
\end{array}
\end{equation}
However, this mimetic property does not
suffice for the derivations to follow. Nullspace consistency will
be used as an additional required mimetic property. This novel property was
introduced in \cite{svard2019convergence} and has been a key factor in
\cite{linders2020properties,ranocha2020discrete}.

\begin{definition}
  A first derivative SBP operator $D$ is nullspace consistent,
  if the nullspace (kernel) of $D$ satisfies
  $\kernel D = \spann \set{\vec{1}}$.
\end{definition}
Here, $\vec{1}$ denotes the discrete grid function with value unity at every
node.

\begin{remark}
  Every first derivative operator $D$ (which is at least first order
  accurate) maps constants to zero, i.e.\ $D \vec{1} = \vec{0}$.
  Hence, the kernel of $D$ always satisfies
  $\spann \set{\vec{1}} \leq \kernel D$.
  Here and in the following, $\leq$ denotes the subspace relation of
  vector spaces.
  If $D$ is not nullspace consistent, there are more discrete grid
  functions besides constants which are mapped to zero (which makes
  it inconsistent with $\partial_t$). Then,
  $\ker D \neq \spann \set{\vec{1}}$ and undesired behavior
  can occur, cf.\
  \cite{svard2019convergence,linders2020properties,ranocha2019some,svard2020convergence}.
\end{remark}

An SBP time discretization of \eqref{eq:ode} using SATs
to impose the initial condition weakly is
\cite{nordstrom2013summation,lundquist2014sbp,boom2015high}
\begin{equation}
\label{eq:SBP-SAT}
  D \vec{u} = \vec{f} + M^{-1} \vec{t}_L \bigl( u_0 - \vec{t}_L^T \vec{u} \bigr).
\end{equation}
The numerical solution $u_+$ at $t=T$ is given by $u_+ = \vec{t}_R^T \vec{u}$,
where $\vec{u}$ solves \eqref{eq:SBP-SAT}.

\begin{remark}
  The interval $[0,T]$ can be partitioned into multiple subintervals/blocks
  such that multiple steps of this procedure can be used sequentially
  \cite{lundquist2014sbp}.
\end{remark}

In order to guarantee that \eqref{eq:SBP-SAT} can be solved for a
linear scalar problem, $D + \sigma M^{-1} \vec{t}_L \vec{t}_L^T$
must be invertible, where $\sigma$ is a real parameter usually
chosen as $\sigma = 1$. The following result has been obtained in
\cite[Lemma~2]{linders2020properties}.
\begin{theorem}
\label{thm:nullspace-consistent}
  If $D$ is a first derivative SBP operator,
  $D + M^{-1} \vec{t}_L \vec{t}_L^T$
  is invertible if and only if $D$ is nullspace consistent.
\end{theorem}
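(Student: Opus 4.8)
The plan is to reduce invertibility to a kernel computation and then evaluate that kernel with the SBP energy identity. Since $D$ and $M$ act on grid functions with finitely many components, the matrix $\tilde D := D + M^{-1}\vec{t}_L\vec{t}_L^T$ is square, so it is invertible if and only if $\kernel\tilde D = \set{\vec 0}$. I would therefore aim to characterize $\kernel\tilde D$ completely and then show it is trivial exactly when $D$ is nullspace consistent.

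First I would take $\vec v$ with $\tilde D\vec v = \vec 0$, i.e.\ $M D\vec v = -\vec t_L(\vec t_L^T\vec v)$, and multiply this identity on the left by $\vec v^T$ to get $\vec v^T M D\vec v = -(\vec t_L^T\vec v)^2$. Rewriting the left-hand side as $\vec v^T M D\vec v = \tfrac12\vec v^T\bigl(MD + (MD)^T\bigr)\vec v$ and inserting the SBP property \eqref{eq:SBP} yields $\tfrac12\bigl((\vec t_R^T\vec v)^2 - (\vec t_L^T\vec v)^2\bigr) = -(\vec t_L^T\vec v)^2$, which rearranges to $(\vec t_R^T\vec v)^2 + (\vec t_L^T\vec v)^2 = 0$. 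Hence $\vec t_L^T\vec v = \vec t_R^T\vec v = 0$, and substituting back into $M D\vec v = -\vec t_L(\vec t_L^T\vec v)$ shows $D\vec v = \vec 0$. Since the reverse inclusion is immediate, this gives the clean characterization $\kernel\tilde D = \kernel D \cap \kernel(\vec t_L^T)$.

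With this characterization both implications follow from a dimension count. If $D$ is nullspace consistent, then $\kernel D = \spann\set{\vec 1}$; because $\vec t_L$ is at least first-order accurate it reproduces constants exactly, so $\vec t_L^T\vec 1 = 1 \neq 0$, forcing $\kernel D \cap \kernel(\vec t_L^T) = \set{\vec 0}$ and hence invertibility. Conversely, if $D$ is not nullspace consistent, then $\spann\set{\vec 1}$ is a proper subspace of $\kernel D$, so $\dim\kernel D \geq 2$; the single linear constraint $\vec t_L^T\vec v = 0$ can lower the dimension by at most one, so $\kernel D \cap \kernel(\vec t_L^T)$ is nontrivial and $\tilde D$ is singular.

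The only genuinely delicate point is the energy step: one must verify that multiplying by $\vec v^T$ collapses the indefinite boundary form into a sum of squares, which is what forces both boundary functionals to vanish and pins $\vec v$ into $\kernel D$. Everything else—the reduction to a kernel, the exact reproduction of constants by $\vec t_L$, and the dimension count—is routine, so I expect the write-up to be short once the energy identity is established.
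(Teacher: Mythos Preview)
Your proof is correct. The energy argument cleanly forces $\vec t_L^T\vec v = \vec t_R^T\vec v = 0$ for any $\vec v\in\kernel\tilde D$, yielding the characterization $\kernel\tilde D = \kernel D \cap \kernel(\vec t_L^T)$, from which both directions follow by the dimension count you give.

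Note, however, that the paper does not actually prove this theorem: it is quoted as Lemma~2 of \cite{linders2020properties}, so there is no in-paper proof to compare your approach against. Your argument is essentially the standard energy-method proof one would expect for this result, and it is self-contained and complete as written.
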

\begin{remark}
  In \cite{ruggiu2018pseudo,ruggiu2020eigenvalue}, it was explicitly shown
  how to prove that $D + M^{-1} \vec{t}_L \vec{t}_L^T$ is invertible in the
  pseudospectral/polynomial and finite difference case.
\end{remark}

As many other one-step time integration schemes, SBP-SAT schemes \eqref{eq:SBP-SAT}
can be characterized as Runge-Kutta methods, given by their Butcher
coefficients \cite{hairer2008solving,butcher2016numerical}
\begin{equation}
\label{eq:butcher}
\renewcommand{\arraystretch}{1.2}
\begin{array}{c | c}
  c & A
  \\ \hline
    & b^T
\end{array},
\end{equation}
where $A \in \R^{s \times s}$ and $b, c \in \R^s$. For \eqref{eq:ode}, a step
from $u_0$ to $u_+ \approx u(\dt)$ is given by
\begin{equation}
\label{eq:RK-step}
  u_i
  =
  u_0 + \dt \sum_{j=1}^{s} a_{ij} \, f(c_j \dt, u_j),
  \qquad
  u_+
  =
  u_0 + \dt \sum_{i=1}^{s} b_{i} \, f(c_i \dt, u_i).
\end{equation}
Here, $u_i$ are the stage values of the Runge-Kutta method.
The following characterization of \eqref{eq:SBP-SAT} as Runge-Kutta method
was given in \cite{boom2015high}.
\begin{theorem}
\label{thm:SBP-SAT-as-RK}
  Consider a first derivative SBP operator $D$.
  If $D + M^{-1} \vec{t}_L \vec{t}_L^T$ is invertible, \eqref{eq:SBP-SAT} is equivalent
  to an implicit Runge-Kutta method with the Butcher coefficients
  \begin{equation}
  \label{eq:SBP-SAT-as-RK}
  \begin{aligned}
    A
    &= \frac{1}{T} (D + M^{-1} \vec{t}_L \vec{t}_L^T)^{-1}
    = \frac{1}{T} (M D + \vec{t}_L \vec{t}_L^T)^{-1} M,
    \\
    b &= \frac{1}{T} M \vec{1},
    \phantom{+ \vec{t}_L \vec{t}_L^T)^{-1} M}
    c = \frac{1}{T} (\tau_1, \dots, \tau_s)^T.
  \end{aligned}
  \end{equation}
\end{theorem}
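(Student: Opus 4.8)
The plan is to rewrite \eqref{eq:SBP-SAT} as a linear system for the stage vector $\vec{u}$, invert it, and match the result term by term against the Runge-Kutta stage and update equations \eqref{eq:RK-step}, identifying $T$ with $\dt$ so that $\tau_j = c_j \dt$ and hence $\vec{f}_j = f(c_j \dt, \vec{u}_j)$. First I would collect the terms involving $\vec{u}$, obtaining $(D + M^{-1}\vec{t}_L\vec{t}_L^T)\vec{u} = \vec{f} + M^{-1}\vec{t}_L u_0$. Since the hypothesis guarantees invertibility of $D + M^{-1}\vec{t}_L\vec{t}_L^T$, this yields $\vec{u} = (D + M^{-1}\vec{t}_L\vec{t}_L^T)^{-1}\vec{f} + (D + M^{-1}\vec{t}_L\vec{t}_L^T)^{-1}M^{-1}\vec{t}_L\, u_0$. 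The first summand is exactly $\dt\, A \vec{f}$ for the claimed $A$, and the equivalence of the two expressions for $A$ follows at once by factoring $D + M^{-1}\vec{t}_L\vec{t}_L^T = M^{-1}(MD + \vec{t}_L\vec{t}_L^T)$.

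The next step is to record the two elementary consistency facts that drive everything: because $D$ is at least first order accurate it annihilates constants, $D\vec{1} = \vec{0}$, and because $\vec{t}_L, \vec{t}_R$ reproduce boundary values to order at least $p \ge 1$ they are exact on constants, so $\vec{t}_L^T\vec{1} = \vec{t}_R^T\vec{1} = 1$. Using these, I would show the constant summand collapses to $u_0\vec{1}$: it suffices to verify $(D + M^{-1}\vec{t}_L\vec{t}_L^T)\vec{1} = M^{-1}\vec{t}_L$, which is immediate from $D\vec{1}=\vec{0}$ and $\vec{t}_L^T\vec{1}=1$. Applying the inverse gives $(D + M^{-1}\vec{t}_L\vec{t}_L^T)^{-1}M^{-1}\vec{t}_L = \vec{1}$, so $\vec{u} = u_0\vec{1} + \dt\, A\vec{f}$, which is precisely the stage equation in \eqref{eq:RK-step}.

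It then remains to reproduce the update $u_+ = u_0 + \dt\, b^T\vec{f}$ from $u_+ = \vec{t}_R^T\vec{u}$. The idea is to test \eqref{eq:SBP-SAT} against $\vec{1}^T M$ from the left. Here the SBP property \eqref{eq:SBP} does the work: transposing $MD + (MD)^T = \vec{t}_R\vec{t}_R^T - \vec{t}_L\vec{t}_L^T$ and applying it to $\vec{1}$ while using $D\vec{1}=\vec{0}$ and $\vec{t}_L^T\vec{1} = \vec{t}_R^T\vec{1} = 1$ gives $\vec{1}^T M D = (\vec{t}_R - \vec{t}_L)^T$. Substituting into $\vec{1}^T M D\vec{u} = \vec{1}^T M\vec{f} + (\vec{1}^T\vec{t}_L)(u_0 - \vec{t}_L^T\vec{u})$ and again using $\vec{1}^T\vec{t}_L = 1$, the two $\vec{t}_L^T\vec{u}$ contributions cancel and one is left with $\vec{t}_R^T\vec{u} = u_0 + \vec{1}^T M\vec{f}$. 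Since $b = \frac{1}{T}M\vec{1}$ and $M$ is symmetric, $\dt\, b^T\vec{f} = \vec{1}^T M\vec{f}$, so $u_+ = u_0 + \dt\, b^T\vec{f}$ as required.

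I expect the update formula to be the main obstacle, not because it is deep but because it is where the SBP structure is genuinely needed: the cancellation of the SAT contribution $\vec{t}_L^T\vec{u}$ hinges on simultaneously using $\vec{1}^T MD = (\vec{t}_R-\vec{t}_L)^T$ and the exactness $\vec{t}_L^T\vec{1} = 1$, together with keeping the transposes and the symmetry of $M$ straight. The stage equation, by contrast, is essentially a direct inversion once the constant-reproduction identities are in hand.
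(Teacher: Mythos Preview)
Your argument is correct. The paper does not actually prove this theorem; it is stated as a known result with a citation to \cite{boom2015high}, so there is no in-paper proof to compare against. Your derivation---rewriting \eqref{eq:SBP-SAT} as $(D+M^{-1}\vec{t}_L\vec{t}_L^T)\vec{u}=\vec{f}+M^{-1}\vec{t}_L u_0$, using $D\vec{1}=\vec{0}$ and $\vec{t}_L^T\vec{1}=1$ to reduce the constant term to $u_0\vec{1}$, and then testing against $\vec{1}^T M$ together with the SBP identity to obtain the update---is the standard route and matches the style of the analogous argument the paper does carry out for its new scheme in the proof of Theorem~\ref{thm:RK-characterization}.
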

The factor $\frac{1}{T}$ is needed since the Butcher coefficients
of a Runge-Kutta method are normalized to the interval $[0,1]$.

Next, we recall some classical stability properties of Runge-Kutta methods
for linear problems, cf. \cite[Section~IV.3]{hairer2010solving}.
The absolute value of solutions of the scalar linear ODE
\begin{equation}
\label{eq:scalar-test-ode}
  u'(t) = \lambda u(t),
  \quad
  u(0) = u_0 \in \C,
  \quad
  \lambda \in \C,
\end{equation}
cannot increase if $\Re \lambda \leq 0$.
The numerical solution after one time step of a Runge-Kutta method with Butcher
coefficients $A, b, c$ is $u_+ = R(\lambda \, \Delta t) u_0$, where
\begin{equation}
\label{eq:stability-function}
  R(z)
  =
  1 + z b^T (\I - z A)^{-1} \vec{1}
  =
  \frac{\det(\I - z A + z \vec{1} b^T)}{\det(\I - z A)}
\end{equation}
is the \emph{stability function} of the Runge-Kutta method.
The stability property of the ODE is mimicked discretely as
$\abs{u_+} \leq \abs{u_0}$ if $\abs{R(\lambda \, \Delta t)} \leq 1$.
\begin{definition}
  A Runge-Kutta method with stability function $\abs{R(z)} \leq 1$
  for all $z \in \C$ with $\Re(z) \leq 0$ is $A$ stable.
  The method is $L$ stable, if it is $A$ stable and $\lim_{z \to \infty} R(z) = 0$.
\end{definition}
Hence, $A$ stable methods are stable for every time step
$\Delta t > 0$ and $L$ stable methods damp out stiff components
as $\abs{\lambda} \to \infty$.

The following stability properties have been obtained in
\cite{lundquist2014sbp,boom2015high}.
\begin{theorem}
\label{thm:SBP-SAT-stability}
  Consider a first derivative SBP operator $D$.
  If $D + M^{-1} \vec{t}_L \vec{t}_L^T$ is invertible,
  then the SBP-SAT scheme \eqref{eq:SBP-SAT} is both $A$ and $L$ stable.
\end{theorem}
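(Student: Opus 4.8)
The plan is to establish $A$ stability by a discrete energy estimate that mimics the continuous argument, and then to obtain $L$ stability from the explicit Butcher coefficients of Theorem~\ref{thm:SBP-SAT-as-RK}. Throughout I would work with the scalar linear test equation \eqref{eq:scalar-test-ode} on $[0,T]$, so that $\dt = T$ and $z = \lambda T$, allowing complex data and writing $\vec{u}^*$ for the conjugate transpose. Inserting $\vec{f} = \lambda \vec{u}$ into \eqref{eq:SBP-SAT} and multiplying by $M$ gives
\begin{equation*}
  M D \vec{u} = \lambda M \vec{u} + \vec{t}_L\bigl(u_0 - \vec{t}_L^T \vec{u}\bigr),
\end{equation*}
and I abbreviate the discrete initial value by $u_L = \vec{t}_L^T \vec{u}$ and the output by $u_+ = \vec{t}_R^T \vec{u}$.

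The central step is to contract this equation with $\vec{u}^*$ from the left and add the complex conjugate of the resulting scalar identity. Since $M$ and $D$ are real, the left-hand side becomes $\vec{u}^*\bigl(M D + (M D)^T\bigr)\vec{u}$, to which I apply the SBP property \eqref{eq:SBP} to obtain $\abs{u_+}^2 - \abs{u_L}^2$. On the right-hand side the volume term yields $2\Re(\lambda)\, \vec{u}^* M \vec{u}$, while the SAT contribution $\vec{u}^*\vec{t}_L(u_0 - u_L)$ together with its conjugate gives $2\Re(\bar{u}_L u_0) - 2\abs{u_L}^2$. Completing the square through $2\Re(\bar{u}_L u_0) - \abs{u_L}^2 = \abs{u_0}^2 - \abs{u_0 - u_L}^2$, these combine to
\begin{equation*}
  \abs{u_+}^2 - \abs{u_0}^2 = 2\Re(\lambda)\, \vec{u}^* M \vec{u} - \abs{u_0 - u_L}^2.
\end{equation*}
Because $M$ is positive definite and the penalty term is nonnegative, the right-hand side is nonpositive whenever $\Re(\lambda) \le 0$; this is precisely $\abs{R(\lambda T)} \le 1$, hence $A$ stability.

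For $L$ stability I would use the identity $R(\infty) = 1 - b^T A^{-1} \vec{1}$, valid for invertible $A$, together with the coefficients $A = \frac{1}{T}(D + M^{-1}\vec{t}_L\vec{t}_L^T)^{-1}$ and $b = \frac{1}{T} M \vec{1}$ from \eqref{eq:SBP-SAT-as-RK}. Since $D \vec{1} = \vec{0}$ and the boundary interpolation is exact for constants, so that $\vec{t}_L^T \vec{1} = 1$, one computes $A^{-1}\vec{1} = T(D + M^{-1}\vec{t}_L\vec{t}_L^T)\vec{1} = T M^{-1}\vec{t}_L$, and then $b^T A^{-1}\vec{1} = \vec{1}^T M M^{-1}\vec{t}_L = \vec{t}_L^T\vec{1} = 1$, whence $R(\infty) = 0$.

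The routine parts here are the bookkeeping of the conjugate terms and the completion of the square. The one genuine subtlety is solvability on the closed left half-plane, since the hypothesis only supplies invertibility for $\lambda = 0$: I would first argue that for $\Re(\lambda) < 0$ the energy identity forces $\vec{u} = \vec{0}$ in the homogeneous case $u_0 = 0$, using that $2\Re(\lambda)\,\vec{u}^* M \vec{u}$ is then strictly negative unless $\vec{u} = \vec{0}$, which establishes unique solvability and the bound on the open left half-plane. The bound then extends to the imaginary axis by continuity of the rational function $R$, whose poles cannot lie on $\i\R$ without contradicting $\abs{R}\le 1$ nearby in the open left half-plane.
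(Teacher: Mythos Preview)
The paper does not supply its own proof of Theorem~\ref{thm:SBP-SAT-stability}; it quotes the result from \cite{lundquist2014sbp,boom2015high} and moves on. Your argument is correct and is essentially the standard one from those references: the energy estimate via the SBP property \eqref{eq:SBP} and completion of the square gives $A$ stability, and the direct computation $R(\infty)=1-b^T A^{-1}\vec{1}=0$ from the Butcher coefficients \eqref{eq:SBP-SAT-as-RK} gives $L$ stability. Your handling of solvability on the closed left half-plane (injectivity for $\Re\lambda<0$ from the strict energy inequality, then extension to $\i\R$ by continuity of the rational stability function) is the right way to close the argument.
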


\begin{corollary}
  The SBP-SAT scheme \eqref{eq:SBP-SAT} is both $A$ and $L$ stable
  if $D$ is a nullspace consistent SBP operator.
\end{corollary}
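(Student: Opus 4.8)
The plan is to observe that this corollary is an immediate consequence of the two preceding theorems, obtained by chaining them through the invertibility hypothesis. The only thing that needs verifying is that the assumption ``$D$ is a nullspace consistent SBP operator'' supplies exactly the condition required to invoke Theorem~\ref{thm:SBP-SAT-stability}.

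First I would invoke Theorem~\ref{thm:nullspace-consistent}: since $D$ is a first derivative SBP operator that is assumed nullspace consistent, the ``if'' direction of that equivalence guarantees that the matrix $D + M^{-1} \vec{t}_L \vec{t}_L^T$ is invertible. This is the pivotal step, because invertibility of this particular matrix is precisely the standing hypothesis shared by the Runge-Kutta characterization in Theorem~\ref{thm:SBP-SAT-as-RK} and the stability result in Theorem~\ref{thm:SBP-SAT-stability}.

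Having established invertibility, I would then apply Theorem~\ref{thm:SBP-SAT-stability} directly: for any first derivative SBP operator $D$ with $D + M^{-1} \vec{t}_L \vec{t}_L^T$ invertible, the SBP-SAT scheme \eqref{eq:SBP-SAT} is both $A$ and $L$ stable. Combining the two implications yields the claim, so the corollary requires no further computation.

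I do not expect any genuine obstacle here, since the statement is a pure specialization that replaces the algebraic invertibility condition by the more transparent structural condition of nullspace consistency. The only subtlety worth a sentence is that Theorem~\ref{thm:nullspace-consistent} is an ``if and only if'' statement, of which only the nullspace-consistency-implies-invertibility direction is needed; the reverse implication plays no role in deriving stability and may be left unused.
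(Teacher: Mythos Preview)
Your proposal is correct and matches the paper's own proof essentially verbatim: the paper simply states that the result follows immediately from Theorem~\ref{thm:nullspace-consistent} and Theorem~\ref{thm:SBP-SAT-stability}. Your additional remark that only one direction of the equivalence in Theorem~\ref{thm:nullspace-consistent} is needed is accurate and harmless.
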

\begin{proof}
  This result follows immediately from Theorem~\ref{thm:nullspace-consistent}
  and Theorem~\ref{thm:SBP-SAT-stability}.
\end{proof}

\section{The New Schemes}
\label{sec:new-schemes}

The idea behind the novel SBP time integration scheme introduced in the
following is to mimic the reformulation of the ODE \eqref{eq:ode} as
an integral equation
\begin{equation}
\label{eq:ode-as-integral}
  u(t) = u_0 + \int_0^t f(\tau, u(\tau)) \dif \tau.
\end{equation}
Taking the time derivative on both sides yields $u'(t) = f(t, u(t))$.
The initial condition $u(0) = u_0$ is satisfied because
$\int_0^0 f(\tau, u(\tau)) \dif \tau = 0$.
Hence, the solution $u$ of \eqref{eq:ode} can be written implicitly
as the solution of the integral equation \eqref{eq:ode-as-integral}.
Note that the integral operator $\int_0^t \cdot \dif \tau$ is the inverse
of the derivative operator $\od{}{t}$ with a vanishing initial condition
at $t = 0$. Hence, a discrete inverse (an integral operator)
of the discrete derivative operator $D$ with a vanishing initial condition
will be our target.

\begin{definition}
  In the space of discrete grid functions, the scalar product induced by
  $M$ is used throughout this article. The adjoint operators with respect
  to this scalar product will be denoted by $\cdot^*$, i.e.\
  $D^* = M^{-1} D^T M$.
  The adjoint of a discrete grid function $\vec{u}$ is denoted by
  $\vec{u}^* = \vec{u}^T M$.
\end{definition}
By definition, the adjoint operator $D^*$ of $D$ satisfies
\begin{equation}
  \scp{\vec{u}}{D^* \vec{v}}_M
  =
  \vec{u}^T M D^* \vec{v}
  =
  \vec{u}^T D^T M \vec{v}
  =
  (D \vec{u})^T M \vec{v}
  =
  \scp{D \vec{u}}{\vec{v}}
\end{equation}
for all grid functions $\vec{u}, \vec{v}$. The adjoint $\vec{u}^*$
is a discrete representation of the inverse Riesz map applied to a
grid function $\vec{u}$ \cite[Theorem~9.18]{roman2008advanced} and
satisfies
\begin{equation}
  \vec{u}^* \vec{v}
  =
  \vec{u}^T M \vec{v}
  =
  \scp{\vec{u}}{\vec{v}}_M.
\end{equation}

The following lemma and definition were introduced in \cite{ranocha2020discrete}.
\begin{lemma}
  For a nullspace consistent first derivative SBP operator $D$,
  $\dim \kernel D^* = 1$.
\end{lemma}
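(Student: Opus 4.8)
The plan is to reduce the claim to two elementary facts of linear algebra: that the dimension of a kernel is unchanged under conjugation by an invertible matrix, and that a matrix and its transpose have the same rank. Once these are in place, nullspace consistency of $D$ finishes the argument immediately.

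First I would unwind the definition $D^* = M^{-1} D^T M$. Since $M$ is invertible, a grid function $\vec{v}$ lies in $\kernel D^*$ if and only if $D^T M \vec{v} = \vec{0}$, that is, if and only if $M \vec{v} \in \kernel D^T$. The map $\vec{v} \mapsto M \vec{v}$ is therefore a linear bijection from $\kernel D^*$ onto $\kernel D^T$, so $\dim \kernel D^* = \dim \kernel D^T$. (Equivalently, $D^*$ and $D^T$ are similar matrices and hence share all rank and nullity data.)

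Next I would relate $\kernel D^T$ to $\kernel D$. Because the row rank and column rank of any matrix coincide, $\operatorname{rank} D^T = \operatorname{rank} D$, and the rank-nullity theorem then gives $\dim \kernel D^T = s - \operatorname{rank} D^T = s - \operatorname{rank} D = \dim \kernel D$, where $s$ is the number of grid nodes. Combining this with the previous step yields $\dim \kernel D^* = \dim \kernel D$.

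Finally, nullspace consistency of $D$ means precisely that $\kernel D = \spann \set{\vec{1}}$, which is one-dimensional, so $\dim \kernel D^* = 1$, as claimed. I do not expect any genuine obstacle here; the only point requiring care is that the adjoint is defined through conjugation by $M$ rather than as the plain transpose, so one must verify that this conjugation (and the passage to the transpose) preserves the \emph{dimension} of the kernel even though it generally changes the kernel itself.
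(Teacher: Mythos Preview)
Your proof is correct. The paper itself does not supply a proof of this lemma; it merely cites \cite{ranocha2020discrete} where the result was introduced. Your argument---reducing to $\dim\kernel D^T$ via the similarity $D^* = M^{-1}D^T M$, then invoking rank equality of a matrix and its transpose together with rank--nullity, and finally using $\kernel D = \spann\{\vec{1}\}$---is the standard one-line linear algebra justification and is exactly what one would expect here.
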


\begin{definition}
  A fixed but arbitrarily chosen basis vector of $\kernel D^*$ for a nullspace
  consistent SBP operator $D$ is denoted as $\osc$.
\end{definition}
The name $\osc$ is intended to remind the reader of (grid) oscillations,
since the kernel of $D^*$ is orthogonal to the image of $D$
\cite[Theorem~10.3]{roman2008advanced} which contains all sufficiently
resolved functions. Several examples are given in \cite{ranocha2020discrete}.
To prove $\kernel D^* \perp \image D$, choose any $D \vec{u} \in \image D$
and $\vec{v} \in \kernel D^*$ and compute
\begin{equation}
  \scp{D \vec{u}}{\vec{v}}_M
  =
  \scp{\vec{u}}{D^* \vec{v}}_M
  =
  \scp{\vec{u}}{\vec{0}}_M
  =
  0.
\end{equation}

\begin{example}
\label{ex:Lobatto-D-D*-o}
  Consider the SBP operator of order $p = 1$ defined by the $p+1 = 2$
  Lobatto-Legendre nodes $\tau_1 = 0$ and $\tau_2 = T$ in $[0, T]$.
  Then,
  \begin{equation}
  \begin{aligned}
    D &= \frac{1}{T} \begin{pmatrix} -1 & 1 \\ -1 & 1 \end{pmatrix},
    &
    M &= \frac{T}{2} \begin{pmatrix} 1 & 0 \\ 0 & 1 \end{pmatrix},
    &
    \vec{t}_L &= \begin{pmatrix} 1 \\ 0 \end{pmatrix},
    &
    \vec{t}_R &= \begin{pmatrix} 0 \\ 1 \end{pmatrix}.
  \end{aligned}
  \end{equation}
  Therefore,
  \begin{equation}
    D^*
    =
    M^{-1} D^T M
    =
    \frac{1}{T} \begin{pmatrix} -1 & -1 \\ 1 & 1 \end{pmatrix}
  \end{equation}
  and $\kernel D^* = \spann \set{\osc}$, where $\osc = (-1, 1)^T$.
  Here, $\osc$ represents the highest resolvable grid oscillation
  on $[\tau_1, \tau_2]$ and $\osc$ is orthogonal to $\image D$, since
  $\osc^T M D = \vec{0}^T$.
\end{example}

The following technique has been used in \cite{ranocha2020discrete} to analyze
properties of SBP operators in space. Here, it will be used to create new SBP
schemes in time.
Consider a nullspace consistent first derivative SBP operator $D$ on the interval
$[0, T]$ using $s$ grid points and the corresponding subspaces
\begin{equation}
  V_0 = \set{\vec{u} \in \R^s | \vec{u}(t = 0) = \vec{t}_L^T \vec{u} = 0 },
  \quad
  V_1 = \set{\vec{u} \in \R^s | \exists \vec{v} \in \R^s \colon \vec{u} = D \vec{v}}.
\end{equation}
Here and in the following, $\vec{u}(t = 0)$ denotes the value of the
discrete function $\vec{u}$ at the initial time $t = 0$. For example,
$\vec{u}(t = 0) = \vec{t}_L^T \vec{u} = \vec{u}^{(1)}$ is the first
coefficient of $\vec{u}$ if $\tau_1 = 0$ and $\vec{t}_L = (1, 0, \dots, 0)^T$.

$V_0$ is the vector space of all grid functions which vanish at the left
boundary point, i.e.\ $V_0 = \kernel \vec{t}_L^T$.
$V_1$ is the vector space of all grid functions which can be represented
as derivatives of other grid functions, i.e.\ $V_1 = \image D$ is the
image of~$D$.

\begin{remark}
  From this point in the paper, $D$ denotes a nullspace consistent
  first derivative SBP operator.
\end{remark}

\begin{lemma}
\label{lem:D-V0-V1-bijective}
  The mapping $D\colon V_0 \to V_1$ is bijective,
  i.e.\ one-to-one and onto, and hence invertible.
\end{lemma}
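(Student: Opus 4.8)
The plan is to combine nullspace consistency with a dimension count, so that surjectivity follows for free once injectivity is established. First I would record the dimensions of the two spaces. Since the boundary interpolation $\vec{t}_L^T$ is exact for constants (it approximates $u(0)$ with order of accuracy at least $p \geq 1$, hence reproduces the constant function), we have $\vec{t}_L^T \vec{1} = 1$; in particular $\vec{t}_L \neq \vec{0}$, so $V_0 = \kernel \vec{t}_L^T$ is the kernel of a nonzero linear functional and $\dim V_0 = s - 1$. On the other side, nullspace consistency gives $\kernel D = \spann \set{\vec{1}}$, so the rank-nullity theorem yields $\dim V_1 = \dim \image D = s - \dim \kernel D = s - 1$. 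Thus $V_0$ and $V_1$ have the same finite dimension.

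Next I would check that $D$ restricted to $V_0$ is well-defined and injective. Well-definedness is immediate, since $D \vec{u} \in \image D = V_1$ for every $\vec{u} \in V_0$. For injectivity, suppose $\vec{u} \in V_0$ with $D \vec{u} = \vec{0}$. Then $\vec{u} \in \kernel D = \spann \set{\vec{1}}$, so $\vec{u} = c \vec{1}$ for some scalar $c$. The defining constraint $\vec{t}_L^T \vec{u} = 0$ of $V_0$ then reads $c \, \vec{t}_L^T \vec{1} = c = 0$, whence $\vec{u} = \vec{0}$. Hence the only element of $V_0$ mapped to zero by $D$ is zero itself, and $D\colon V_0 \to V_1$ is injective.

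Finally, an injective linear map between finite-dimensional vector spaces of equal dimension is automatically surjective: injectivity forces $\dim \image(D|_{V_0}) = \dim V_0 = s - 1 = \dim V_1$, and since $\image(D|_{V_0}) \subseteq V_1$ with equal finite dimensions, the two subspaces coincide. Therefore $D\colon V_0 \to V_1$ is bijective, and hence invertible. The only point requiring care — and the natural place where a proof could go wrong — is the identity $\vec{t}_L^T \vec{1} = 1$ (one really only needs $\vec{t}_L^T \vec{1} \neq 0$), which simultaneously guarantees $\dim V_0 = s - 1$ and eliminates the constant mode in the injectivity step. It follows from the accuracy of the boundary interpolation; as a consistency check, applying the SBP property \eqref{eq:SBP} to $\vec{1}$ and using $D \vec{1} = \vec{0}$ gives $(\vec{t}_L^T \vec{1})^2 = (\vec{t}_R^T \vec{1})^2$, so the left and right interpolations carry the same weight on constants.
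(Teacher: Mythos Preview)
Your proof is correct. The injectivity argument is essentially identical to the paper's (nullspace consistency forces a vector in $\kernel D$ to be a constant, and the boundary condition $\vec{t}_L^T \vec{1} = 1$ kills it). The difference lies in how surjectivity is obtained: you use a dimension count (rank--nullity gives $\dim V_1 = s-1$, and $\vec{t}_L \neq \vec{0}$ gives $\dim V_0 = s-1$, so an injective linear map between them is automatically onto), whereas the paper argues constructively --- given $\vec{u} = D\vec{v} \in V_1$, the explicit preimage in $V_0$ is $\vec{v} - (\vec{t}_L^T \vec{v})\vec{1}$. Your route is cleaner and is in fact the viewpoint the paper adopts immediately afterward in Remark~\ref{rem:V_0} ($V_0 \cong \R^s/\kernel D$). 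The paper's constructive argument, on the other hand, hands you the formula for the preimage directly, which is morally the recipe used later in Section~\ref{sec:operator-construction} to build $\Dinv \F$ in practice (solve $DX = \F$, then subtract $\vec{t}_L^T X[:,j]$ from each column).
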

\begin{proof}
  Given $\vec{u} \in V_1$, there is a $\vec{v} \in \R^s$ such that
  $\vec{u} = D \vec{v}$. Hence,
  \begin{equation}
  \label{eq:D-V0-V1-bijective-proof1}
    D (\vec{v} - (\vec{t}_L^T \vec{v}) \vec{1})
    =
    D \vec{v} - (\vec{t}_L^T \vec{v}) D \vec{1}
    =
    D \vec{v}
    =
    \vec{u}
  \end{equation}
  and
  \begin{equation}
  \label{eq:D-V0-V1-bijective-proof2}
    \vec{t}_L^T (\vec{v} - (\vec{t}_L^T \vec{v}) \vec{1})
    =
    \vec{t}_L^T \vec{v} - \vec{t}_L^T \vec{v}
    =
    0,
  \end{equation}
  since $\vec{t}_L^T \vec{v}$ is a scalar. Hence,
  \eqref{eq:D-V0-V1-bijective-proof2} implies that
  $\vec{v} - (\vec{t}_L^T \vec{v}) \vec{1} \in V_0$.
  Moreover, \eqref{eq:D-V0-V1-bijective-proof1} shows that
  an arbitrary $\vec{u} \in V_1$ can be written as the image
  of a vector in $V_0$ under $D$.
  Therefore, $D\colon V_0 \to V_1$ is surjective (i.e.\ onto).

  To prove that $D$ is injective (i.e.\ one-to-one), consider an arbitrary
  $\vec{u} \in V_1$ and assume there are $\vec{v}, \vec{w} \in V_0$
  such that $D \vec{v} = \vec{u} = D \vec{w}$. Then,
  $D (\vec{v} - \vec{w}) = \vec{0}$. Because of nullspace consistency,
  $\vec{v} - \vec{w} = \alpha \vec{1}$ for a scalar $\alpha$.
  Since $\vec{v}, \vec{w} \in V_0 = \kernel \vec{t}_L^T$,
  \begin{equation}
    0
    =
    \vec{t}_L^T (\vec{v} - \vec{w})
    =
    \alpha \vec{t}_L^T \vec{1}
    =
    \alpha.
  \end{equation}
  Thus, $\vec{v} = \vec{w}$.
\end{proof}

\begin{remark}
\label{rem:V_0}
  $V_0$ is isomorphic to the quotient space $\R^s / \kernel D$, since $D$ is
  nullspace consistent. Hence, Lemma~\ref{lem:D-V0-V1-bijective} basically
  states that $D$ is a bijective mapping from $V_0 \cong \R^s / \kernel D$
  to $V_1 = \image D$.
\end{remark}

\begin{definition}
  The inverse operator of $D\colon V_0 \to V_1$ is denoted as
  $\Dinv\colon V_1 \to V_0$.
\end{definition}
The inverse operator $\Dinv$ is a discrete integral operator such that
$\Dinv \vec{v} \approx \int_0^t v(\tau) \dif \tau$. In general, there is a
one-parameter family of integral operators given by $\int_{t_0}^t v(\tau) \dif \tau$.
Here, we chose the one with $t_0 = 0$ to be consistent with
\eqref{eq:ode-as-integral}.

\begin{example}
\label{ex:Lobatto-V0-V1-Dinv}
  Continuing Example~\ref{ex:Lobatto-D-D*-o}, the vector spaces $V_0$
  and $V_1$ are
  \begin{equation}
    V_0
    =
    \set{ \vec{u} = \alpha \begin{pmatrix} 0 \\ 1 \end{pmatrix} | \alpha \in \R},
    \qquad
    V_1
    =
    \set{ \vec{u} = \beta \begin{pmatrix} 1 \\ 1 \end{pmatrix} | \beta \in \R}.
  \end{equation}
  This can be seen as follows.
  For $V_0 = \kernel \vec{t}_L^T$, using $\vec{t}_L^T = (1, 0)$ implies
  that the first component of $\vec{u} \in V_0$ is zero and that the
  second one can be chosen arbitrarily.
  For $V_1 = \image D$, note that both rows of $D$ are identical.
  Hence, every $\vec{u} = D \vec{v} \in V_1$ must have the same
  first and second component.

  At the level of $\R^2$, the inverse $\Dinv$ of $D$ can be represented as
  \begin{equation}
  \label{eq:ex-Lobatto-V0-V1-Dinv-Dinv1}
    \Dinv
    =
    T
    \begin{pmatrix}
      0 & 0 \\
      0 & 1
    \end{pmatrix}.
  \end{equation}
  Indeed, if $\vec{u} = \begin{pmatrix} 0 \\ \vec{u}_2 \end{pmatrix} \in V_0$,
  then
  \begin{equation}
    \Dinv D \vec{u}
    =
    T
    \begin{pmatrix}
      0 & 0 \\
      0 & 1
    \end{pmatrix}
    \frac{1}{T}
    \begin{pmatrix}
      -1 & 1 \\
      -1 & 1
    \end{pmatrix}
    \begin{pmatrix}
      0 \\
      \vec{u}_2
    \end{pmatrix}
    =
    \begin{pmatrix}
      0 & 0 \\
      0 & 1
    \end{pmatrix}
    \begin{pmatrix}
      \vec{u}_2 \\
      \vec{u}_2
    \end{pmatrix}
    =
    \begin{pmatrix}
      0 \\
      \vec{u}_2
    \end{pmatrix}
    =
    \vec{u}.
  \end{equation}
  Similarly, if $\vec{u} = \vec{u}_1 \begin{pmatrix} 1 \\ 1 \end{pmatrix} \in V_1$,
  then
  \begin{equation}
    D \Dinv \vec{u}
    =
    \frac{1}{T}
    \begin{pmatrix}
      -1 & 1 \\
      -1 & 1
    \end{pmatrix}
    T
    \begin{pmatrix}
      0 & 0 \\
      0 & 1
    \end{pmatrix}
    \begin{pmatrix}
      \vec{u}_1 \\
      \vec{u}_1
    \end{pmatrix}
    =
    \begin{pmatrix}
      -1 & 1 \\
      -1 & 1
    \end{pmatrix}
    \begin{pmatrix}
      0 \\
      \vec{u}_1
    \end{pmatrix}
    =
    \begin{pmatrix} \vec{u}_1 \\ \vec{u}_1 \end{pmatrix}
    =
    \vec{u}.
  \end{equation}
  Hence, $\Dinv D = \id_{V_0}$ and $D \Dinv = \id_{V_1}$, where
  $\id_{V_i}$ is the identity on $V_i$.

  Note that the matrix representation of $\Dinv$ at the level of $\R^s$ is
  not unique since $\Dinv$ is only defined on $V_1 = \kernel \osc^* = \image D$.
  In general, a linear mapping from $\R^s$ to $\R^s$ is determined uniquely by
  $s^2$ real parameters (the entries of the corresponding matrix representation).
  Since $\Dinv$ is defined as a mapping between the $(s-1)$-dimensional spaces
  $V_1$ and $V_0$, it is given by $(s-1)^2$ parameters. Requiring that $\Dinv$
  maps to $V_0$ yields $s$ additional constraints $\vec{t}_L^T \Dinv = \vec{0}^T$.
  Hence, $s - 1$ degrees of freedom remain for any matrix representation of
  $\Dinv$ at the level of $\R^s$. Indeed, adding $\vec{v} \osc^*$ to any matrix
  representation of $\Dinv$ in $\R^s$ results in another valid representation
  if $\vec{v} \in V_0 = \kernel \vec{t}_L^T$.
  In this example, another valid representation of $\Dinv$ at the level of $\R^2$ is
  \begin{equation}
  \label{eq:ex-Lobatto-V0-V1-Dinv-Dinv2}
    \Dinv = T
    \begin{pmatrix}
      0 & 0 \\
      1 & 0
    \end{pmatrix},
  \end{equation}
  which still satisfies $\Dinv D = \id_{V_0}$, $D \Dinv = \id_{V_1}$, and
  yields the same results as the previous matrix representation when applied
  to any $\vec{v} \in V_1$.
\end{example}

Now, we have introduced the inverse $\Dinv$ of $D\colon V_0 \to V_1$,
which is a discrete integral operator $\Dinv\colon V_1 \to V_0$. However,
the integral operator $\Dinv$ is only defined for elements
of the space $V_1 = \image D$.
Hence, one has to make sure that a generic right hand side vector $\vec{f}$
is in the range of the derivative operator $D$ in order to apply the
inverse $\Dinv$. To guarantee this, components in the direction
of grid oscillations $\osc$ must be removed. For this, the discrete
projection/filter operator
\begin{equation}
\label{eq:F}
  \F = \I - \frac{\osc \osc^*}{\norm{\osc}_M^2}
\end{equation}
will be used.
\begin{lemma}
\label{lem:F}
  The projection/filter operator $\F$ defined in \eqref{eq:F} is an orthogonal
  projection onto the range of $D$, i.e.\ onto
  $V_1 = \image D = (\kernel D^*)^\perp$.
  It is symmetric and positive semidefinite with respect to the scalar
  product induced by $M$.
\end{lemma}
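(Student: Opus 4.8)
The plan is to verify, one at a time, the three defining features of an $M$-orthogonal projection onto $V_1$: idempotence ($\F^2 = \F$), self-adjointness with respect to $\scp{\cdot}{\cdot}_M$, and $\image \F = V_1$. Since an operator that is both idempotent and self-adjoint is precisely an orthogonal projection onto its range, establishing these three facts proves every assertion in the lemma, with positive semidefiniteness following for free. Writing $P := \frac{\osc \osc^*}{\norm{\osc}_M^2}$, so that $\F = \I - P$, the elementary identity used throughout is $\osc^* \osc = \osc^T M \osc = \norm{\osc}_M^2$.

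First I would check idempotence. A one-line computation gives $P^2 = \frac{\osc (\osc^* \osc) \osc^*}{\norm{\osc}_M^4} = P$, whence $\F^2 = \I - 2P + P^2 = \I - P = \F$. For self-adjointness I would use that $M$ is symmetric, so $\osc^* = \osc^T M = (M \osc)^T$ and therefore $M P = \frac{(M\osc)(M\osc)^T}{\norm{\osc}_M^2}$ is a symmetric matrix; hence so is $M \F = M - M P$. The identity $M \F = (M\F)^T = \F^T M$ is exactly the statement $\scp{\F \vec{u}}{\vec{v}}_M = \scp{\vec{u}}{\F \vec{v}}_M$ for all grid functions, i.e.\ $\F$ is symmetric with respect to $\scp{\cdot}{\cdot}_M$.

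The range I would identify by two short inclusions. Applying $\osc^*$ to $\F \vec{u}$ and using $\osc^* \osc = \norm{\osc}_M^2$ yields $\osc^* \F \vec{u} = 0$, so $\image \F \subseteq \kernel \osc^*$; conversely, $\osc^* \vec{v} = 0$ gives $\F \vec{v} = \vec{v}$, so $\kernel \osc^* \subseteq \image \F$. Now $\kernel \osc^* = \set{\vec{v} | \scp{\osc}{\vec{v}}_M = 0} = (\spann \set{\osc})^\perp = (\kernel D^*)^\perp$. The orthogonality $\kernel D^* \perp \image D$ established above only gives the inclusion $\image D \subseteq (\kernel D^*)^\perp$; equality follows from the dimension count $\dim (\kernel D^*)^\perp = s - 1 = s - \dim \kernel D = \dim \image D$, where $\dim \kernel D^* = 1$ comes from the preceding lemma and $\dim \kernel D = 1$ from nullspace consistency. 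Hence $\image \F = (\kernel D^*)^\perp = \image D = V_1$.

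Finally, positive semidefiniteness is an immediate consequence of idempotence and self-adjointness: for any grid function $\vec{u}$,
\begin{equation*}
  \scp{\vec{u}}{\F \vec{u}}_M
  = \scp{\vec{u}}{\F^2 \vec{u}}_M
  = \scp{\F \vec{u}}{\F \vec{u}}_M
  = \norm{\F \vec{u}}_M^2 \geq 0 .
\end{equation*}
I do not expect a genuine obstacle, since everything reduces to standard facts about orthogonal rank-one projections; the only step that is not a pure one-liner is upgrading the inclusion $\image D \subseteq (\kernel D^*)^\perp$ to an equality, which rests entirely on the dimension bound $\dim \kernel D^* = 1$ supplied by the preceding lemma.
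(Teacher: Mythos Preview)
Your proof is correct and covers all the claims. The paper's own argument is terser and slightly different in emphasis: it simply cites that $\frac{\osc\osc^*}{\norm{\osc}_M^2}$ is the standard orthogonal projection onto $\spann\{\osc\}$, so $\F = \I - P$ is the orthogonal projection onto the complement $(\kernel D^*)^\perp = \image D$, and then obtains positive semidefiniteness from the Cauchy--Schwarz inequality via $\scp{\vec{u}}{\F\vec{u}}_M = \norm{\vec{u}}_M^2 - |\scp{\vec{u}}{\osc}_M|^2/\norm{\osc}_M^2 \ge 0$. Your version unpacks the projection properties (idempotence, $M$-self-adjointness, range) explicitly and deduces positive semidefiniteness from $\scp{\vec{u}}{\F\vec{u}}_M = \norm{\F\vec{u}}_M^2$ instead of Cauchy--Schwarz; you also make the identification $(\kernel D^*)^\perp = \image D$ explicit via the dimension count, which the paper leaves to a reference. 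The trade-off is that the paper's proof is shorter but leans on a citation, whereas yours is fully self-contained and the positive-semidefiniteness step is arguably cleaner.
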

\begin{proof}
  Clearly, $\frac{\osc \osc^*}{\norm{\osc}_M^2}$ is the usual
  orthogonal projection onto $\spann\{ \osc \}$
  \cite[Theorems~9.14 and~9.15]{roman2008advanced}.
  Hence, $\F$ is the orthogonal
  projection onto the orthogonal complement
  $\spann\{ \osc \}^\perp = (\kernel D^*)^\perp = \image D = V_1$.
  In particular, for a (real or complex valued) discrete grid
  function $\vec{u}$,
  \begin{equation}
    \scp{\vec{u}}{\F \vec{u}}_M
    =
    \scp{\vec{u}}{\vec{u}}_M - \frac{|\scp{\vec{u}}{\osc}_M|^2}{\norm{\osc}_M^2}
    \geq
    0
  \end{equation}
  because of the Cauchy-Schwarz inequality
  \cite[Theorem~9.3]{roman2008advanced}.
\end{proof}

Now, all ingredients to mimic the integral equation \eqref{eq:ode-as-integral}
have been provided. Applying at first the discrete projection operator $\F$
and second the discrete integral operator $J$ to a generic right hand side
$\vec{f}$ results in $\Dinv \F \vec{f}$, which is a discrete analog of the
integral $\int_0^t f \dif \tau$.
Additionally, the initial condition has to be imposed, which is done by
adding the constant initial value as $u_0 \otimes \vec{1}$.
Putting it all together, a new class of SBP schemes mimicking the integral
equation \eqref{eq:ode-as-integral} discretely is proposed as
\begin{equation}
\label{eq:SBP-proj}
  \vec{u}
  =
  u_0 \otimes \vec{1}
  + \Dinv \F \vec{f},
  \quad
  u_+ = \vec{t}_R^T \vec{u}.
\end{equation}
For a scalar ODE \eqref{eq:ode}, the first term on the right-hand side of
the proposed scheme~\eqref{eq:SBP-proj} is $u_0 \otimes \vec{1} = (u_0, \dots,
u_0)^T \in \R^s$.
Note that \eqref{eq:SBP-proj} is an implicit scheme since
$f = f(t, u)$.

\begin{example}
\label{ex:Lobatto-SBP-proj}
  Continuing Examples~\ref{ex:Lobatto-D-D*-o} and~\ref{ex:Lobatto-V0-V1-Dinv},
  the adjoint of $\osc$ is
  \begin{equation}
    \osc^*
    =
    \osc^T M
    =
    (-1, 1)
    \frac{T}{2}
    \begin{pmatrix}
      1 & 0 \\
      0 & 1
    \end{pmatrix}
    =
    \frac{T}{2}
    (-1, 1).
  \end{equation}
  Hence, ${\norm{\osc}_M^2} = \osc^* \osc = T$,
  and the projection/filter operator \eqref{eq:F} is
  \begin{equation}
    \F
    =
    \I - \frac{\osc \osc^*}{\norm{\osc}_M^2}
    =
    \begin{pmatrix}
      1 & 0 \\
      0 & 1
    \end{pmatrix}
    -
    \frac{1}{2}
    \begin{pmatrix}
      1 & -1 \\
      -1 & 1
    \end{pmatrix}
    =
    \frac{1}{2}
    \begin{pmatrix}
      1 & 1 \\
      1 & 1
    \end{pmatrix}.
  \end{equation}
  Thus, $\F$ is a smoothing filter operator that removes the highest
  grid oscillations and maps a grid function into the image of the
  derivative operator $D$. Hence, the inverse $\Dinv$, the discrete
  integral operator, can be applied after $\F$, resulting in
  \begin{equation}
  \label{eq:ex-Lobatto-SBP-proj-DinvF}
    \Dinv \F
    =
    \Dinv \left( \I - \frac{\osc \osc^*}{\norm{\osc}_M^2} \right)
    =
    T
    \begin{pmatrix}
      0 & 0 \\
      0 & 1
    \end{pmatrix}
    \frac{1}{2}
    \begin{pmatrix}
      1 & 1 \\
      1 & 1
    \end{pmatrix}
    =
    \frac{T}{2}
    \begin{pmatrix}
      0 & 0 \\
      1 & 1
    \end{pmatrix}.
  \end{equation}
  Finally, for an arbitrary $\vec{u} \in \R^2$,
  \begin{equation}
    \Dinv \F \vec{u}
    =
    \frac{T}{2}
    \begin{pmatrix}
      0 & 0 \\
      1 & 1
    \end{pmatrix}
    \begin{pmatrix}
      \vec{u}_1 \\
      \vec{u}_2
    \end{pmatrix}
    =
    \frac{T}{2}
    \begin{pmatrix}
      0 \\
      \vec{u}_1 + \vec{u}_2
    \end{pmatrix}
    \in V_0.
  \end{equation}
  A more involved example of the development presented here is given in
  \autoref{sec:example-gauss}.
\end{example}

\subsection{Summarizing the Development}
\label{sec:summarizing-SBP-proj}

As stated earlier, the SBP time integration scheme \eqref{eq:SBP-proj}
mimics the integral reformulation \eqref{eq:ode-as-integral}
of the ODE \eqref{eq:ode}.
Instead of using $\Dinv$ as discrete analog of the integral operator
$\int_0^t \cdot \dif \tau$ directly, the projection/filter operator $\F$ defined in
\eqref{eq:F} must be applied first in order to guarantee that the generic
vector $\vec{f}$ is in the image of $D$.
Finally, the initial condition is imposed strongly.

Note that
\begin{equation}
\label{eq:tL-u}
  \vec{t}_L^T \vec{u}
  =
  \vec{t}_L^T (u_0 \otimes \vec{1})
  + \vec{t}_L^T \Dinv \left( \I - \frac{\osc \osc^*}{\norm{\osc}_M^2} \right) \vec{f}
  =
  u_0.
\end{equation}
The second summand vanishes because $\Dinv$ returns a
vanishing value at $t = 0$, i.e.\ $\vec{t}_L^T \Dinv = \vec{0}^T$,
since $\Dinv\colon V_1 \to V_0$ maps onto $V_0 = \kernel \vec{t}_L^T$.

Note that the projection/filter operator $\F$ is required in \eqref{eq:SBP-proj},
since the discrete integral operator $\Dinv$ only operates on objects in $V_1$.
Note that the matrix representations of $\Dinv$ in $\R^s$ given in the examples
above are constructed such that they should be applied only to vectors
$\vec{v} \in V_1$. As explained
in Example~\ref{ex:Lobatto-V0-V1-Dinv}, the matrix representation of $\Dinv$ in
$\R^s$ is not unique. Thus, choosing any of these representations without
applying the filter/projection operator $\F$ would result in undefined/unpredictable
behavior. The projection/filter operator $\F$ is necessary to make
\eqref{eq:SBP-proj} well-defined.
Indeed, the product $\Dinv \F$ is well-defined, i.e.\ it is the same for any
matrix representation of $\Dinv$, since $\F$ maps to $V_1 = \image D$ and the
action of $\Dinv$ is defined uniquely on this space. In particular, $\Dinv \F$
itself is a valid matrix representation of $\Dinv$ in $\R^{s}$.
As an example, $\Dinv \F$ in \eqref{eq:ex-Lobatto-SBP-proj-DinvF} is a linear
combination of the possible representations \eqref{eq:ex-Lobatto-V0-V1-Dinv-Dinv1}
and \eqref{eq:ex-Lobatto-V0-V1-Dinv-Dinv2} of $\Dinv$ in $\R^{s}$ and thus also
a representation of $\Dinv$ in $R^{s}$.


Another argument for the necessity of the filter/projection operator $\F$ can
be derived using the following result.
\begin{lemma}
\label{lem:SBP-proj-Du}
  Let $D$ be a nullspace consistent first derivative SBP operator.
  Then, $D \Dinv \F = F$ and the solution $\vec{u}$ of
  \eqref{eq:SBP-proj} satisfies
  \begin{equation}
  \label{eq:SBP-proj-Du}
    D \vec{u}
    =
    \F \vec{f}.
  \end{equation}
\end{lemma}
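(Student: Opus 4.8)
The plan is to prove the operator identity $D \Dinv \F = \F$ first, and then to obtain \eqref{eq:SBP-proj-Du} almost immediately by applying $D$ to the scheme \eqref{eq:SBP-proj}. Everything rests on the two structural facts already established: that $\F$ is the orthogonal projection onto $V_1 = \image D$ (Lemma~\ref{lem:F}) and that $\Dinv$ is the genuine inverse of the bijection $D\colon V_0 \to V_1$ (Lemma~\ref{lem:D-V0-V1-bijective} together with the definition of $\Dinv$).

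For the operator identity, the key observation is that $\F$ already maps every grid function into $V_1$. Indeed, by Lemma~\ref{lem:F} we have $\F \vec{v} \in V_1$ for every $\vec{v} \in \R^s$, and by the definition of $\Dinv$ we have $D \Dinv = \id_{V_1}$ on $V_1$. Hence
\begin{equation}
  D \Dinv \F \vec{v} = D \Dinv (\F \vec{v}) = \id_{V_1}(\F \vec{v}) = \F \vec{v}
\end{equation}
for every $\vec{v}$, which is precisely $D \Dinv \F = \F$. I would also remark that this argument resolves the non-uniqueness of the matrix representation of $\Dinv$ discussed in Example~\ref{ex:Lobatto-V0-V1-Dinv}: since $\F$ restricts the input to the space $V_1$ on which the action of $\Dinv$ is uniquely defined, the product $\Dinv \F$ is unambiguous.

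To derive \eqref{eq:SBP-proj-Du}, I would apply $D$ to both sides of the scheme \eqref{eq:SBP-proj}. The first term vanishes because $D$ maps constants to zero, i.e.\ $D (u_0 \otimes \vec{1}) = u_0 \otimes (D \vec{1}) = \vec{0}$. For the second term, the operator identity just established gives $D \Dinv \F \vec{f} = \F \vec{f}$. Combining both contributions yields $D \vec{u} = \F \vec{f}$.

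The argument is short and relies entirely on prior results, so there is no genuine obstacle beyond careful domain bookkeeping. The one point that requires attention is that $\Dinv$ is only defined on $V_1$, so one must invoke that $\F$ projects onto $V_1$ \emph{before} using $D \Dinv = \id_{V_1}$; this is exactly why the filter $\F$ is indispensable in the definition of the scheme, which is the intended secondary message of the lemma.
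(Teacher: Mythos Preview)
Your proof is correct, and the overall structure matches the paper's: first show $D\Dinv\F = \F$, then apply $D$ to \eqref{eq:SBP-proj} and use $D(u_0 \otimes \vec{1}) = \vec{0}$.

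Where you differ is in how you verify the operator identity. You argue in one line: $\F$ maps into $V_1$ (Lemma~\ref{lem:F}), and $D\Dinv = \id_{V_1}$ by definition of $\Dinv$ as the inverse of the bijection $D\colon V_0 \to V_1$, so $D\Dinv\F\vec{v} = \F\vec{v}$. The paper instead decomposes an arbitrary $\vec{f}$ as $\vec{f} = D\vec{v} + \alpha\osc$ with $\vec{v} \in V_0$, then pushes this through using $\F D = D$, $\F\osc = \vec{0}$, and $\Dinv D = \id_{V_0}$ to arrive at $D\Dinv\F\vec{f} = D\vec{v} = \F\vec{f}$. Your route is shorter and cleaner; the paper's decomposition, while unnecessary here, is the technique reused later (e.g.\ in the proofs of Theorems~\ref{thm:SBP-proj-accuracy} and~\ref{thm:SBP-proj-mD-accuracy}), which may explain why the authors chose to rehearse it at this point.
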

Before proving Lemma~\ref{lem:SBP-proj-Du}, we discuss its meaning here.
In general, it is not possible to find a solution $\vec{u}$ of $D \vec{u} = \vec{f}$
for an arbitrary right-hand side $\vec{f}$, since $D$ is not invertible on
$\R^s$. Multiplying $\vec{f}$ by the orthogonal projection operator $\F$ ensures
that the new right-hand side $\F \vec{f}$ of \eqref{eq:SBP-proj-Du} is in the
image of $D$ and hence that \eqref{eq:SBP-proj-Du} can be solved for any given
$\vec{f}$. This projection $\F$ onto $V_1 = \image D$ is necessary in the discrete
case because of the finite dimensions.

\begin{proof}[Proof of Lemma~\ref{lem:SBP-proj-Du}]
  Taking the discrete derivative on both sides of
  \eqref{eq:SBP-proj} results in
  \begin{equation}
    D \vec{u}
    =
    D \Dinv \F \vec{f},
  \end{equation}
  since $D (u_0 \otimes \vec{1}) = u_0 \otimes (D \vec{1}) = \vec{0}$.
  Hence, \eqref{eq:SBP-proj-Du} holds if $D \Dinv \F = F$.
  To show $D \Dinv \F = F$, it suffices to show $D \Dinv \F \vec{f} = F \vec{f}$
  for arbitrary $\vec{f}$.  Write $\vec{f}$ as
  $\vec{f} = D \vec{v} + \alpha \osc$,
  where $\vec{t}_L^T \vec{v} = 0$ and $\alpha \in \R$. This is always possible
  since $D$ is nullspace consistent. Then,
  \begin{equation}
    D \Dinv \F \vec{f}
    =
    D \Dinv \F D \vec{v} + \alpha D \Dinv \F \osc
    =
    D \Dinv D \vec{v}
    =
    D \vec{v},
  \end{equation}
  where we used
  $\F D = D$,
  $\F \osc = \vec{0}$,
  $\Dinv D = \id_{V_0}$, and
  $\vec{v} \in V_0 = \kernel \vec{t}_L^T$.
  Using again
  $\F D = D$ and
  $\F \osc = \vec{0}$, we get
  \begin{equation}
    D \vec{v}
    =
    \F D \vec{v} + \vec{0}
    =
    \F (D \vec{v} + \alpha \osc)
    =
    \F \vec{f}.
  \end{equation}
  Hence, $D \Dinv \F \vec{f} = F \vec{f}$.
\end{proof}

\begin{remark}
  In the context of SBP operators, two essentially different interpretations
  of integrals arise. Firstly, the integral $\int_0^T \cdot \dif \tau$ gives
  the $L^2$ scalar product, approximated by the mass matrix $M$ which maps
  discrete functions to scalar values.
  Secondly, the integral $\int_0^t \cdot \dif \tau$ is the inverse of the
  derivative with vanishing values at $t = 0$. This operator is discretized
  as $\Dinv$ on its domain of definition $V_1 = \image D$ and maps a discrete
  grid function in $V_1 = \image D$ to a discrete grid function in
  $V_0 = \kernel \vec{t}_L^T$.
\end{remark}

\subsection{Linear Stability}

In this section, linear stability properties of the new scheme
\eqref{eq:SBP-proj} are established.
\begin{theorem}
\label{thm:A-stability}
  For nullspace consistent SBP operators, the scheme \eqref{eq:SBP-proj} is
  $A$ stable.
\end{theorem}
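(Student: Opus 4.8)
The plan is to mimic, at the discrete level, the continuous energy argument for the scalar test equation $u' = \lambda u$, where $\frac{\dif}{\dif t}|u|^2 = 2\Re(\lambda)|u|^2 \le 0$ whenever $\Re\lambda \le 0$. For the linear test equation \eqref{eq:scalar-test-ode} the right-hand side is $\vec{f} = \lambda \vec{u}$, so Lemma~\ref{lem:SBP-proj-Du} immediately delivers the discrete analog of $u' = \lambda u$, namely $D\vec{u} = \F\vec{f} = \lambda\F\vec{u}$. This identity is what I would take as the starting point, and it is the only place where the specific structure of the scheme \eqref{eq:SBP-proj} enters.

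Next I would convert the SBP property \eqref{eq:SBP} into a discrete energy estimate. For a complex-valued grid function $\vec{u}$, note that $\scp{D\vec{u}}{\vec{u}}_M = \overline{\scp{\vec{u}}{D\vec{u}}_M}$ because $M$ and $D$ are real, so adding $\scp{\vec{u}}{D\vec{u}}_M$ to its conjugate and using $MD + (MD)^T = \vec{t}_R\vec{t}_R^T - \vec{t}_L\vec{t}_L^T$ gives
\begin{equation*}
  2\Re\scp{\vec{u}}{D\vec{u}}_M = |\vec{t}_R^T\vec{u}|^2 - |\vec{t}_L^T\vec{u}|^2 = |u_+|^2 - |u_0|^2 .
\end{equation*}
Here the right boundary term is identified via $u_+ = \vec{t}_R^T\vec{u}$, and the left boundary term is identified via the strong imposition of the initial condition $\vec{t}_L^T\vec{u} = u_0$ established in \eqref{eq:tL-u}. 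This is precisely where the strong initial condition is essential: it makes the left boundary contribution equal to $|u_0|^2$ exactly, which is exactly the clean boundary term a sharp energy estimate requires.

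Finally I would substitute $D\vec{u} = \lambda\F\vec{u}$ into the left-hand side. By Lemma~\ref{lem:F} the operator $\F$ is symmetric and positive semidefinite with respect to $\scp{\cdot}{\cdot}_M$, so $\scp{\vec{u}}{\F\vec{u}}_M = \norm{\F\vec{u}}_M^2$ is real and nonnegative. Hence
\begin{equation*}
  |u_+|^2 - |u_0|^2 = 2\Re\bigl(\lambda\,\scp{\vec{u}}{\F\vec{u}}_M\bigr) = 2\Re(\lambda)\,\scp{\vec{u}}{\F\vec{u}}_M \le 0
\end{equation*}
whenever $\Re\lambda \le 0$. This yields $|u_+| \le |u_0|$, equivalently $|R(z)| \le 1$ on the closed left half-plane, which is exactly $A$ stability.

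The argument is short because the two earlier lemmas carry the real content, so I do not expect a deep structural obstacle; the points needing care are bookkeeping. The first is the complex arithmetic: confirming $\scp{D\vec{u}}{\vec{u}}_M = \overline{\scp{\vec{u}}{D\vec{u}}_M}$ so that the boundary identity genuinely picks out $2\Re$, and that $\scp{\vec{u}}{\F\vec{u}}_M$ is real. The second, which I would address briefly, is the existence and uniqueness of $\vec{u}$ solving the implicit linear system $(\I - \lambda\Dinv\F)\vec{u} = u_0\otimes\vec{1}$; the same estimate applied to the homogeneous problem ($u_0 = 0$) forces $\scp{\vec{u}}{\F\vec{u}}_M = 0$, hence $\F\vec{u} = \vec{0}$, so $D\vec{u} = \lambda\F\vec{u} = \vec{0}$ and $\vec{u}\in\kernel D = \spann\set{\vec{1}}$ with $\vec{t}_L^T\vec{u} = 0$, giving $\vec{u} = \vec{0}$ and thus invertibility for $\Re\lambda < 0$.
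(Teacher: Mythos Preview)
Your proof is correct and follows essentially the same route as the paper: both combine the SBP identity $MD + (MD)^T = \vec{t}_R\vec{t}_R^T - \vec{t}_L\vec{t}_L^T$ with the strong initial condition $\vec{t}_L^T\vec{u} = u_0$, Lemma~\ref{lem:SBP-proj-Du} to replace $D\vec{u}$ by $\lambda\F\vec{u}$, and the positive semidefiniteness of $\F$ from Lemma~\ref{lem:F}. Your additional remark on solvability of the implicit system for $\Re\lambda < 0$ is a nice complement that the paper's proof does not include.
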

\begin{proof}
  For the scalar linear ODE \eqref{eq:scalar-test-ode} with $\Re \lambda \leq 0$,
  the energy method will be applied to the scheme \eqref{eq:SBP-proj}.
  We write $\overline{\cdot}$ to denote the complex conjugate.
  Using $\vec{t}_L^T \vec{u} = u_0$ from \eqref{eq:tL-u} and
  $u_+ = \vec{t}_R^T \vec{u}$ from the definition of the scheme \eqref{eq:SBP-proj},
  the difference of the energy at the final and initial time is
  \begin{equation}
  \label{eq-thm:A-stability}
  \begin{aligned}
    \abs{u_+}^2 - \abs{u_0}^2
    &=
    \abs{\vec{t}_R^T \vec{u}}^2 - \abs{\vec{t}_L^T \vec{u}}^2
    =
    \overline{\vec{u}}^T \vec{t}_R \vec{t}_R^T \vec{u}
    - \overline{\vec{u}}^T \vec{t}_L \vec{t}_L^T \vec{u}
    \\
    &=
    \overline{\vec{u}}^T \bigl( M D + (M D)^T \bigr) \vec{u},
  \end{aligned}
  \end{equation}
  where the SBP property \eqref{eq:SBP} has been used in the last
  equality. As shown in Lemma~\ref{lem:SBP-proj-Du}, the scheme \eqref{eq:SBP-proj}
  yields $D \vec{u} = \F \vec{f}$. For the scalar linear ODE
  \eqref{eq:scalar-test-ode}, $\vec{f} = \lambda \vec{u}$. Hence, we can replace
  $D \vec{u}$ by $\F \vec{f} = \lambda \F \vec{u}$ in \eqref{eq-thm:A-stability},
  resulting in
  \begin{equation}
  \begin{aligned}
    \abs{u_+}^2 - \abs{u_0}^2
    &=
    2 \Re\bigl( \overline{\vec{u}}^T M D \vec{u} \bigr)
    \\
    &=
    2 \Re\bigl( \lambda \overline{\vec{u}}^T M \F \vec{u} \bigr)
    =
    2 \underbrace{\Re(\lambda)}_{\leq 0}
    \underbrace{\overline{\vec{u}}^T M \F \vec{u}}_{\geq 0}
    \leq
    0.
  \end{aligned}
  \end{equation}
  The second factor is non-negative because $\F$ is positive semidefinite
  with respect to the scalar product induced by $M$, cf.
  Lemma~\ref{lem:F}.
  Therefore, $\abs{u_+}^2 \leq \abs{u_0}^2$, implying that the scheme
  is $A$ stable.
\end{proof}

In general, the novel SBP scheme \eqref{eq:SBP-proj} is not
$L$ stable, cf. Remark~\ref{rem:L-B-stability}.

\subsection{Characterization as a Runge-Kutta Method}

Unsurprisingly, the new method \eqref{eq:SBP-proj} can be characterized as
a Runge-Kutta method.
\begin{theorem}
\label{thm:RK-characterization}
  For nullspace consistent SBP operators that are at least first order
  accurate, the method \eqref{eq:SBP-proj} is
  a Runge-Kutta method with Butcher coefficients
  \begin{equation}
  \label{eq:RK-characterization}
    A
    =
    \frac{1}{T} \Dinv \F
    =
    \frac{1}{T} \Dinv \left( \I - \frac{\osc \osc^*}{\norm{\osc}_M^2} \right),
    \quad
    b = \frac{1}{T} M \vec{1},
    \quad
    c = \frac{1}{T} (\tau_1, \dots, \tau_s)^T.
  \end{equation}
\end{theorem}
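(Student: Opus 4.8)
The goal is to show that the scheme \eqref{eq:SBP-proj} coincides, step for step, with the Runge-Kutta method given by the Butcher coefficients \eqref{eq:RK-characterization}. My plan is to start from the stage equations \eqref{eq:RK-step} and reverse-engineer them from the scheme. Recall that for the scalar ODE with right-hand side $f(c_j \dt, u_j)$ the stage values $u_i$ should satisfy $u_i = u_0 + \dt \sum_j a_{ij} f_j$, which in vector form reads $\vec{u} = u_0 \otimes \vec{1} + \dt\, A \vec{f}$. Comparing this directly with \eqref{eq:SBP-proj}, namely $\vec{u} = u_0 \otimes \vec{1} + \Dinv \F \vec{f}$, forces the identification $\dt\, A = \Dinv \F$. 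Since the Butcher tableau is normalized to the reference interval $[0,1]$ while the SBP operator lives on $[0,T]$, the time step played by the SBP construction is $\dt = T$, which gives $A = \frac{1}{T} \Dinv \F$ as claimed.

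Next I would verify the vectors $b$ and $c$. For $c$, the abscissae $c_i$ must be the grid nodes $\tau_i$ rescaled to $[0,1]$, i.e.\ $c = \frac{1}{T}(\tau_1, \dots, \tau_s)^T$; this is immediate from the identification $\vec{f}_i = f(\tau_i, \vec{u}_i) = f(c_i \dt, \vec{u}_i)$ with $\dt = T$. For $b$, I would start from the output formula $u_+ = \vec{t}_R^T \vec{u}$ and the Runge-Kutta update $u_+ = u_0 + \dt \sum_i b_i f_i = u_0 + T b^T \vec{f}$. Substituting the stage equation $\vec{u} = u_0 \otimes \vec{1} + \Dinv \F \vec{f}$ into $u_+ = \vec{t}_R^T \vec{u}$ and using $\vec{t}_R^T \vec{1} = 1$ (valid since $D$ is at least first order accurate, so $\vec{t}_R$ reproduces constants) yields $u_+ = u_0 + \vec{t}_R^T \Dinv \F \vec{f}$. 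Matching the two expressions reduces the claim $b = \frac{1}{T} M \vec{1}$ to establishing the identity
\begin{equation}
  \vec{t}_R^T \Dinv \F = (M \vec{1})^T \F = \vec{1}^T M \F,
\end{equation}
since $\F$ already appears in the stage operator and $\F^2 = \F$ lets me attach it on the right without harm.

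The main obstacle is therefore the boundary identity $\vec{t}_R^T \Dinv \F = \vec{1}^T M \F$. The natural route is to test it against the two complementary pieces of $\R^s$ relative to the decomposition used in Lemma~\ref{lem:SBP-proj-Du}: write an arbitrary $\vec{f} = D\vec{v} + \alpha \osc$ with $\vec{t}_L^T \vec{v} = 0$. On the oscillatory part, $\F \osc = \vec{0}$ kills both sides. On the part $D\vec{v}$, I have $\Dinv \F D \vec{v} = \Dinv D \vec{v} = \vec{v}$ (using $\F D = D$ and $\Dinv D = \id_{V_0}$), so the left-hand side is $\vec{t}_R^T \vec{v}$. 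The right-hand side is $\vec{1}^T M \F D \vec{v} = \vec{1}^T M D \vec{v}$, and here I would invoke the SBP property \eqref{eq:SBP}: $\vec{1}^T M D \vec{v} = \vec{1}^T\bigl(\vec{t}_R \vec{t}_R^T - \vec{t}_L \vec{t}_L^T - (MD)^T\bigr)\vec{v}$. The term $\vec{1}^T (MD)^T \vec{v} = (D\vec{1})^T M \vec{v} = 0$ vanishes since $D \vec{1} = \vec{0}$, and $\vec{1}^T \vec{t}_L \vec{t}_L^T \vec{v} = (\vec{t}_L^T\vec{1})(\vec{t}_L^T \vec{v}) = 0$ because $\vec{t}_L^T \vec{v} = 0$. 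What remains is $(\vec{t}_R^T \vec{1})(\vec{t}_R^T \vec{v}) = \vec{t}_R^T \vec{v}$, matching the left-hand side. This confirms the identity on all of $\R^s$ and completes the verification of $b$; collecting the three identifications of $A$, $b$, $c$ then gives the theorem.
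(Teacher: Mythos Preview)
Your approach is essentially the paper's: identify $A$ and $c$ by direct comparison with \eqref{eq:RK-step}, then manipulate $u_+ = \vec{t}_R^T \vec{u}$ via the SBP property to extract $b$. The paper routes the latter through Lemma~\ref{lem:SBP-proj-Du} (so that $\vec{1}^T M D \vec{u} = \vec{1}^T M \F \vec{f}$ appears directly), while you substitute $\vec{u}$ and work with $\vec{t}_R^T \Dinv \F$; your decomposition $\vec{f} = D\vec{v} + \alpha\osc$ then yields exactly the same SBP computation.

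There is, however, a flaw in one justification. You reduce the claim $b = \frac{1}{T} M \vec{1}$ to the identity $\vec{t}_R^T \Dinv \F = \vec{1}^T M \F$, citing ``$\F^2 = \F$ lets me attach it on the right without harm.'' That reasoning is incorrect: idempotence of $\F$ gives $\vec{t}_R^T \Dinv \F = (\vec{t}_R^T \Dinv \F)\F$, but it says nothing about whether $\vec{1}^T M$ equals $\vec{1}^T M \F$. To conclude $b = \frac{1}{T} M\vec{1}$ you need precisely $\vec{1}^T M \F = \vec{1}^T M$, i.e.\ $\scp{\vec{1}}{\osc}_M = 0$. This holds because first-order accuracy gives $\vec{1} = D\vec{\tau} \in \image D = V_1$, and $V_1 \perp \kernel D^* \ni \osc$. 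This is exactly the step where the hypothesis ``at least first order accurate'' is used, and the paper makes it explicit; once you insert this observation your argument is complete.
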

\begin{proof}
  First, note that one step \eqref{eq:RK-step} from zero to $T$ of
  a Runge-Kutta method with coefficients $A, b, c$ can be written as
  \begin{equation}
  \label{eq:RK-step-tensor-product}
    \vec{u} = u_0 \otimes \vec{1} + T A \vec{f},
    \quad
    u_+ = u_0 + T b^T \vec{f},
  \end{equation}
  where the right hand side vector $\vec{f}$ is given by
  $\vec{f}_i = f(c_i T, \vec{u}_i)$.
  Comparing this expression with
  \begin{equation}
  \tag{\ref{eq:SBP-proj}}
    \vec{u}
    =
    u_0 \otimes \vec{1}
    + \Dinv \F \vec{f},
    \quad
    u_+ = \vec{t}_R^T \vec{u},
  \end{equation}
  where the right hand side is given by
  $\vec{f}_i = f(\tau_i, \vec{u}_i)$,
  the form of $A$ and $c$ is immediately clear.
  The new value $u_+$ of the new SBP method \eqref{eq:SBP-proj} is
  \begin{equation}
    u_+
    =
    \vec{t}_R^T \vec{u}
    =
    \vec{1}^T \vec{t}_R \vec{t}_R^T \vec{u}.
  \end{equation}
  Using the SBP property \eqref{eq:SBP} and $D \vec{1} = \vec{0}$,
  \begin{equation}
    u_+
    =
    \vec{1}^T \vec{t}_L \vec{t}_L^T \vec{u}
    + \vec{1}^T M D \vec{u}
    + \vec{1}^T D^T M \vec{u}
    =
    \vec{t}_L^T \vec{u} + \vec{1}^T M D \vec{u}.
  \end{equation}
  Inserting $\vec{t}_L^T \vec{u} = u_0$ and $D \vec{u}$ from
  \eqref{eq:SBP-proj-Du} results in
  \begin{equation}
    u_+
    =
    u_0 + \vec{1}^T M \left( \I - \frac{\osc \osc^*}{\norm{\osc}_M^2} \right) \vec{f}
    =
    u_0 + \vec{1}^T M \vec{f} - \frac{\scp{\vec{1}}{\osc}_M \scp{\osc}{\vec{f}}_M}{\norm{\osc}_M^2}.
  \end{equation}
  Because of $\vec{1} \in \image D \perp \kernel D^* \ni \osc$, we have
  $\scp{\vec{1}}{\osc}_M = 0$. Hence,
  \begin{equation}
    u_+
    =
    u_0 + \vec{1}^T M \vec{f}.
  \end{equation}
  Comparing this expression with \eqref{eq:RK-step-tensor-product}
  yields the final assertion for $b$.
\end{proof}

\begin{lemma}
\label{lem:first-row}
  For nullspace consistent SBP operators, the first row of the Butcher
  coefficient matrix $A$ in \eqref{eq:RK-characterization} of the method
  \eqref{eq:SBP-proj} is zero if $\vec{t}_L = (1, 0, \dots, 0)^T$.
\end{lemma}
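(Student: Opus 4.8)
The plan is to observe that, under the hypothesis $\vec{t}_L = (1, 0, \dots, 0)^T$, left-multiplication by $\vec{t}_L^T$ extracts the first row of any $s \times s$ matrix, so the first row of $A$ is exactly $\vec{t}_L^T A$. By the characterization \eqref{eq:RK-characterization} this row equals $\frac{1}{T} \vec{t}_L^T \Dinv \F$, and it then remains only to show that this covector vanishes.

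First I would recall that $\Dinv \F$ maps all of $\R^s$ into $V_0 = \kernel \vec{t}_L^T$. Indeed, $\F$ is the orthogonal projection onto $V_1 = \image D$ by Lemma~\ref{lem:F}, so $\F \vec{x} \in V_1$ for every $\vec{x} \in \R^s$, and the discrete integral operator $\Dinv \colon V_1 \to V_0$ sends $V_1$ into $V_0$ by construction. Hence $\Dinv \F \vec{x} \in V_0 = \kernel \vec{t}_L^T$ for every $\vec{x}$, which is precisely the identity $\vec{t}_L^T \Dinv \F = \vec{0}^T$ already exploited in \eqref{eq:tL-u}. Combining the two observations, the first row of $A$ is $\frac{1}{T} \vec{t}_L^T \Dinv \F = \vec{0}^T$, which is the assertion.

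I expect no genuine obstacle here, since the statement is an immediate corollary of the mapping property $\Dinv \F \colon \R^s \to V_0$. The only step meriting attention is the identification of the first row of $A$ with $\vec{t}_L^T A$, which relies on the specific normalization $\vec{t}_L = (1, 0, \dots, 0)^T$; equivalently, on the fact that $\tau_1 = 0$, so that evaluation at $t = 0$ coincides with reading off the first component of a grid function. For a generic interpolation vector the argument would only yield $\vec{t}_L^T A = \vec{0}^T$, rather than the literal vanishing of the first row of $A$.
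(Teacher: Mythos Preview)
Your argument is correct and matches the paper's own proof essentially line for line: both use that $\vec{t}_L^T$ extracts the first row when $\vec{t}_L = (1,0,\dots,0)^T$ and that the range of $\Dinv$ lies in $V_0 = \kernel \vec{t}_L^T$. The only cosmetic difference is that the paper phrases it as ``the first row of $\Dinv$ is zero'' and then multiplies by $\F$, whereas you work directly with $\Dinv \F$; since $\Dinv \F$ is the canonical well-defined representative, your formulation is arguably cleaner.
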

\begin{proof}
  By definition, $\Dinv$ yields a vector with vanishing initial
  condition at the left endpoint, i.e.\ $\vec{t}_L^T \Dinv = \vec{0}^T$.
  Because of $\vec{t}_L = (1, 0, \dots, 0)^T$, we have
  $\vec{0}^T = \vec{t}_L^T \Dinv = \Dinv[1,:]$, which is the first
  row of $\Dinv$, where a notation as in Julia \cite{bezanson2017julia}
  has been used.
\end{proof}

\subsection{Operator Construction}
\label{sec:operator-construction}

To implement the
SBP scheme \eqref{eq:SBP-proj}, the product $\Dinv \F$ has to be computed,
which is (except for a scaling by $T^{-1}$) the matrix $A$ of the corresponding
Runge-Kutta method, cf.\ Theorem~\ref{thm:RK-characterization}.
Since the projection operator $\F$ maps vectors into $V_1 = \image D$, the columns
of $\F$ are in the image of the nullspace consistent SBP derivative
operator $D$. Hence, the matrix equation $D X = \F$ can be solved for $X$,
which is a matrix of the same size as $D$, e.g.\ via a QR factorization,
yielding the least norm solution.
Then, we have to ensure that the columns of $\Dinv \F$ are in
$V_0 = \kernel \vec{t}_L^T$, since $\Dinv$ maps $V_1$ into $V_0$.
This can be achieved by subtracting $\vec{t}_L^T X[:,j]$ from each
column $X[:,j]$ of $X$, $j \in \{1, \dots, s\}$, where a notation
as in Julia \cite{bezanson2017julia} has been used. After this correction,
we have $X = \Dinv \F$. Finally, we need to solve \eqref{eq:SBP-proj} for $\vec{u}$
for each step, using the operator $\Dinv \F$ constructed as described above.

\subsection{Lobatto~IIIA Schemes}

General characterizations of the SBP-SAT scheme \eqref{eq:SBP-SAT} on Radau and
Lobatto nodes as classical collocation Runge-Kutta methods (Radau~IA and IIA or
Lobatto~IIIC, respectively) have been obtained in \cite{ranocha2019some}. A
similar characterization will be obtained in this section.

\begin{theorem}
\label{thm:Lobatto-IIIA}
  If the SBP operator $D$ is given by the nodal polynomial collocation scheme on
  Lobatto-Legendre nodes, the SBP method \eqref{eq:SBP-proj} is the classical
  Lobatto~IIIA method.
\end{theorem}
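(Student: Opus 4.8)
The plan is to show that the stage values and the output produced by \eqref{eq:SBP-proj} coincide with those of the Lobatto~IIIA collocation method, whose collocation polynomial $p$ has degree $\le s$ and is determined by $p(0) = u_0$ together with the collocation conditions $p'(\tau_i) = \vec{f}_i$ at the Lobatto nodes $\tau_1 = 0 < \dots < \tau_s = T$. By Theorem~\ref{thm:RK-characterization} the Butcher nodes $c = \frac{1}{T}(\tau_1,\dots,\tau_s)^T$ are exactly the Lobatto nodes, so it remains to match the stage map $\vec{f} \mapsto \vec{u}$ and the output. By Lemma~\ref{lem:SBP-proj-Du} and \eqref{eq:tL-u}, the solution of \eqref{eq:SBP-proj} is the unique $\vec{u}$ satisfying $D\vec{u} = \F\vec{f}$ and $\vec{t}_L^T \vec{u} = u_0$; I would therefore verify that the nodal values $\vec{u} = (p(\tau_i))_i$ of the collocation polynomial satisfy these two equations. (The case $s = 2$ is already the worked Example~\ref{ex:Lobatto-SBP-proj}, which gives the trapezoidal rule.)

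First I would identify the oscillation vector $\osc$. For the polynomial collocation operator on $s$ Lobatto nodes, $\image D$ consists of the nodal values of all polynomials of degree $\le s-2$, $M$ is the diagonal matrix of Lobatto weights, and $\vec{t}_L$, $\vec{t}_R$ pick out the first and last nodes. Because Lobatto quadrature is exact for degree $\le 2s-3$, the $M$-scalar product agrees with the $L^2$ scalar product on $[0,T]$ for any pair of factors whose product has degree $\le 2s-3$. Hence the one-dimensional space $\kernel D^* = (\image D)^\perp$ (cf.\ Lemma~\ref{lem:F}) is spanned by the nodal values of the shifted Legendre polynomial $P_{s-1}$, the unique degree-$(s-1)$ polynomial that is $L^2$-orthogonal to all polynomials of degree $\le s-2$. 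Thus I may take $\osc$ to be the nodal values of $P_{s-1}$, so that $\F$ removes precisely the $P_{s-1}$-component of a grid function.

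The condition $\vec{t}_L^T \vec{u} = p(\tau_1) = p(0) = u_0$ is immediate. The heart of the proof is the identity $D\vec{u} = \F\vec{f}$. Writing $\omega(\tau) = \prod_{i=1}^s(\tau - \tau_i)$ for the nodal polynomial and $\tilde{p}$ for the degree-$(s-1)$ interpolant of $p$ at the nodes, the interpolation error gives $p = \tilde{p} + \gamma\,\omega$ for some scalar $\gamma$, since $p - \tilde{p}$ vanishes at all nodes and has degree $\le s$. Applying $D$ to $\vec{u}$ returns the nodal values of $\tilde{p}'$ (degree $\le s-2$), whereas $\vec{f} = (p'(\tau_i))_i$ consists of the nodal values of $p' = \tilde{p}' + \gamma\,\omega'$. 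The key computation is that the Lobatto nodal polynomial satisfies $\omega \propto \tau(\tau-T)\,P_{s-1}'$, whence $\omega' \propto P_{s-1}$ by Legendre's differential equation; consequently $\gamma\,\omega'$ lies exactly in the $\osc$-direction removed by $\F$, so $\F\vec{f}$ equals the nodal values of $\tilde{p}' = D\vec{u}$. This shows the stage maps coincide, and since $c_s = 1$ makes Lobatto~IIIA stiffly accurate with $u_+ = p(T) = \vec{t}_R^T \vec{u}$ while $b = \frac{1}{T}M\vec{1}$ are the Lobatto weights, the two methods agree.

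I expect the main obstacle to be the key identity $\omega' \propto P_{s-1}$, i.e.\ the statement that $\F$ annihilates exactly the interpolation-defect term $\gamma\,\omega'$; this is where the special structure of the Lobatto nodes enters and is what forces the projection scheme to reduce to collocation rather than to some other method. I would make it precise using that the $s$ Lobatto points are the roots of $(1-x^2)P_{s-1}'(x)$ together with the Sturm--Liouville form $\frac{\dif}{\dif x}\bigl[(1-x^2)P_{s-1}'(x)\bigr] = -(s-1)s\,P_{s-1}(x)$, transported to $[0,T]$ by the affine change of variables.
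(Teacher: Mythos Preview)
Your argument is correct and rests on the same Legendre identity the paper uses, but the two proofs are organized differently. The paper works through Butcher's simplifying assumption $C(s)$, which together with the Lobatto nodes and weights uniquely determines the Lobatto~IIIA matrix $A$: for polynomials of degree $\le s-2$ the condition is immediate since $\F$ acts as the identity on $\image D$; for the top degree the paper checks that $A\vec{\phi}_{s-1} = \vec{0}$ matches the nodal values of $\int_0^\tau \phi_{s-1}$, computing that antiderivative via Legendre's equation as (on $[-1,1]$) $\frac{1}{(s-1)s}(x^2-1)\phi_{s-1}'(x)$, which vanishes at all Lobatto nodes. You instead start from the collocation polynomial $p$ and verify directly that its nodal values solve the scheme equations $D\vec{u}=\F\vec{f}$, $\vec{t}_L^T\vec{u}=u_0$, reducing to $\omega' \propto P_{s-1}$. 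These two steps are the same identity read in opposite directions: since $\omega \propto (x^2-1)P_{s-1}'$, differentiating gives your $\omega' \propto P_{s-1}$, while integrating $P_{s-1}$ gives the paper's $\int P_{s-1} \propto \omega$, hence zero at the nodes. Your framing makes transparent \emph{why} the projection scheme collapses to collocation---$\F$ removes precisely the interpolation defect $\gamma\,\omega'$---whereas the paper's framing stays in the standard Runge--Kutta idiom of simplifying assumptions; neither route is longer once the Legendre identity is isolated.
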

\begin{proof}
  The Lobatto~IIIA methods are given by the nodes $c$ and weights $b$
  of the Lobatto-Legendre quadrature, just as the SBP method
  \eqref{eq:SBP-proj}. Hence, it remains to prove that the classical condition
  $C(s)$ is satisfied \cite[Section~344]{butcher2016numerical}, where
  \begin{equation}
  \label{eq:C-eta}
    C(\eta)\colon
    \qquad \sum_{j=1}^s a_{i,j} c_j^{q-1} = \frac{1}{q} c_i^q,
    \quad i \in \set{1, \dots, s},\; q \in \set{1, \dots, \eta}.
  \end{equation}
  In other words, all polynomials of degree $\leq p = s - 1$ must be integrated exactly
  by $A$ with vanishing initial value at $t = 0$. By construction of
  $A$, see \eqref{eq:RK-characterization}, this is satisfied for all
  polynomials of degree $\leq p-1$, since the grid oscillations are given by
  $\osc = \vec{\phi}_p$, where $\vec{\phi}_p$ is the Legendre polynomial
  of degree $p$, cf. \cite[Example~3.6]{ranocha2020discrete}.

  Finally, it suffices to check whether $\vec{\phi}_p$ is integrated
  exactly by $A$. The left hand side of \eqref{eq:C-eta} yields
  \begin{equation}
    A \vec{\phi}_p
    =
    \frac{1}{T} \Dinv \left( \I - \frac{\osc \osc^*}{\norm{\osc}_M^2} \right) \vec{\phi}_p
    =
    0,
  \end{equation}
  since $\osc = \vec{\phi}_p$. On the right hand side, transforming
  the time domain to the
  standard interval $[-1,1]$, the analytical integrand of ${\phi}_p$ is
  \begin{equation}
    \int_{-1}^x {\phi}_p(s) \dif s
    =
    \frac{1}{p (p+1)} \int_{-1}^x \partial_s \left[ (s^2 - 1) {\phi}_p'(s) \right] \dif s
    =
    \frac{1}{p (p+1)} (x^2 - 1) {\phi}_p'(x),
  \end{equation}
  since the Legendre polynomials satisfy Legendre's differential equation
  \begin{equation}
    \partial_x \bigl( (1 - x^2) \partial_x {\phi}_p(x) \bigr) + p (p+1) {\phi}_p(x) = 0.
  \end{equation}
  Hence, $\int_{-1}^x {\phi}_p(s) \dif s$ vanishes exactly at the $s = p+1$
  Legendre nodes for polynomials of degree $p$, which are $\pm 1$ and the
  roots of ${\phi}_p'$. Thus, the analytical integral of ${\phi}_p$ vanishes
  at all grid nodes.
\end{proof}

\begin{example}
\label{ex:Lobatto-Abc}
  Continuing Examples~\ref{ex:Lobatto-D-D*-o} and~\ref{ex:Lobatto-SBP-proj},
  the nodes $c = \frac{1}{T} (\tau_1, \tau_2)^T = (0, 1)^T$ are the nodes of
  the Lobatto-Legendre quadrature with two nodes in $[0, 1]$.
  Moreover, the corresponding weights are given by
  \begin{equation}
    b = \frac{1}{T} M \vec{1} = \frac{1}{2} \begin{pmatrix} 1 \\ 1 \end{pmatrix}.
  \end{equation}
  Finally, the remaining Butcher coefficients are given by
  \begin{equation}
    A
    =
    \frac{1}{T} \Dinv \F
    =
    \frac{1}{T} \Dinv \left( \I - \frac{\osc \osc^*}{\norm{\osc}_M^2} \right)
    =
    \frac{1}{2}
    \begin{pmatrix}
      0 & 0 \\
      1 & 1
    \end{pmatrix},
  \end{equation}
  which are exactly the coefficients of the Lobatto~IIIA method with
  $s = 2$ stages.
\end{example}

\begin{remark}
\label{rem:L-B-stability}
  Since the Lobatto~IIIA methods are neither $L$ nor $B$ stable,
  the new SBP method \eqref{eq:SBP-proj} is in general not $L$ or $B$
  stable, too.
\end{remark}

\begin{remark}
\label{rem:other-classical-schemes}
  Because of Lemma~\ref{lem:first-row}, the classical Gauss,
  Radau~IA, Lobatto~IIIB, and Lobatto~IIIC methods cannot be
  expressed in the form \eqref{eq:SBP-proj}.
  The classical Radau~I, Radau~II, and Radau~IIA methods are also not
  included in the class \eqref{eq:SBP-proj}. For example,
  for two nodes, these methods have the $A$ matrices
  \begin{equation}
    \begin{pmatrix}
      0 & 0 \\
      \nicefrac{1}{3} & \nicefrac{1}{3}
    \end{pmatrix},
    \qquad
    \begin{pmatrix}
      \nicefrac{1}{3} & 0 \\
      1 & 0
    \end{pmatrix},
    \qquad
    \begin{pmatrix}
      \nicefrac{5}{12} & \nicefrac{-1}{12} \\
      \nicefrac{3}{4} & \nicefrac{1}{4}
    \end{pmatrix}
  \end{equation}
  while the methods \eqref{eq:SBP-proj} on the left and right Radau
  nodes yield the matrices
  \begin{equation}
    \begin{pmatrix}
      0 & 0 \\
      \nicefrac{1}{6} & \nicefrac{1}{2}
    \end{pmatrix},
    \qquad
    \begin{pmatrix}
      \nicefrac{1}{4} & \nicefrac{1}{12} \\
      \nicefrac{3}{4} & \nicefrac{1}{4}
    \end{pmatrix}.
  \end{equation}
\end{remark}

We develop a related SBP time integration scheme that includes the classical
Lobatto~IIIB collocation method in the appendix. Additionally, we mention
why it seems to be difficult to describe Gauss collocation methods in a
general SBP setting, cf. \autoref{sec:LobattoIIIB}.

\subsection{Order of Accuracy}
\label{sec:accuracy}

Next, we establish results on the order of accuracy of the new class of
SBP time integration methods.

\begin{theorem}
\label{thm:SBP-proj-accuracy}
  For nullspace consistent SBP operators that are $p$th order accurate with
  $p \ge 1$, the Runge-Kutta method \eqref{eq:RK-characterization} associated
  to the SBP time integration scheme \eqref{eq:SBP-proj} has an order of accuracy of
  \begin{enumerate}[label=\alph*)]
    \item
    at least $p$ for general mass matrices $M$.

    \item
    at least $2p$ for diagonal mass matrices $M$.
  \end{enumerate}
\end{theorem}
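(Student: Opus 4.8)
The plan is to establish the order of accuracy by connecting the Runge-Kutta order conditions to the accuracy of the underlying SBP operators. The key observation is that the method \eqref{eq:SBP-proj} mimics the integral reformulation \eqref{eq:ode-as-integral}, so the natural object to control is how well $A = \frac{1}{T}\Dinv\F$ reproduces the exact integration operator on polynomials. Specifically, I would aim to verify the simplifying assumptions $B(q)$ and $C(\eta)$ from the theory of collocation-type methods \cite[Section~344]{butcher2016numerical}, since these together yield the stated orders. For part a), the goal is the condition $C(p)$, namely that $A$ integrates all polynomials of degree $\le p-1$ exactly with vanishing value at $t=0$; this matches what was already exploited in the proof of Theorem~\ref{thm:Lobatto-IIIA}.

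First I would make precise the accuracy statement: since $D$ is a $p$th order accurate SBP derivative operator, it differentiates polynomials up to degree $p$ exactly on the grid, and the interpolation vectors $\vec{t}_L, \vec{t}_R$ reproduce boundary values to order $p$. From $D\vec{u} = \F\vec{f}$ (Lemma~\ref{lem:SBP-proj-Du}) together with $\vec{t}_L^T\vec{u} = u_0$, the scheme inverts $D$ on the appropriate subspace. For a monomial $\tau^{q}$ with $q \le p-1$, its derivative $q\,\tau^{q-1}$ is reproduced exactly by $D$, and since such low-degree polynomials lie in $\image D = V_1$ (the filter $\F$ acts as the identity there, as $\osc = \vec{\phi}_p$ is the degree-$p$ Legendre-type oscillation and is $M$-orthogonal to lower-degree content), applying $\Dinv$ recovers the antiderivative with the correct vanishing initial value. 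This gives $C(p)$, and combined with the standard result that $C(p)$ plus the quadrature accuracy of $b$ (which integrates polynomials of degree $\le p-1$ exactly, consistent with the SBP quadrature $M$) implies overall order at least $p$, part a) follows.

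For part b), I would upgrade the quadrature order: when $M$ is diagonal, the SBP quadrature is known to be exact to degree $2p-1$ (this is the familiar Gauss/Radau/Lobatto-type phenomenon for diagonal-norm SBP operators), so the weights $b = \frac{1}{T}M\vec{1}$ satisfy $B(2p)$. The plan is then to invoke the classical order result that $B(2p)$ and $C(p)$ together imply order $2p$, which is exactly the mechanism behind the high order of Gauss and Lobatto collocation methods. The additional simplifying assumption $D(\zeta)$ may be needed to close the gap; I would check whether it follows automatically from the symmetry of $M$ and the SBP relation, since $b_i = $ quadrature weights and the row condition $\sum_i b_i a_{ij} = b_j(1 - c_j)$ can be read off from the SBP property \eqref{eq:SBP} and $b = \frac{1}{T}M\vec{1}$.

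The main obstacle I anticipate is verifying the simplifying assumption $D(\zeta)$ (or showing it is not required) and, more delicately, confirming that the diagonal-norm SBP quadrature genuinely attains degree $2p-1$ exactness for the operators in question rather than merely degree $p$. The filter $\F$ also needs careful handling: one must confirm that $\F$ leaves polynomials of degree $\le p-1$ untouched while only removing the degree-$p$ oscillatory component $\osc$, so that the composition $\Dinv\F$ does not degrade accuracy below what $\Dinv$ alone achieves on $V_1$. I expect the cleanest route is to prove $C(p)$ directly as above and then cite the tensor-product / simplifying-assumption machinery of \cite{butcher2016numerical,hairer2008solving} to deduce both orders, relegating the diagonal-norm superconvergence of the quadrature to the established properties of diagonal SBP operators.
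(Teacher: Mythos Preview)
Your outline for part~a) is essentially the paper's argument: verify $C(p)$ directly from the construction of $A = \frac{1}{T}\Dinv\F$ (the filter $\F$ acts as the identity on polynomials of degree $\le p-1$ because these lie in $\image D$ and are thus $M$-orthogonal to $\osc$), verify $B(p)$ from the quadrature accuracy of a general SBP norm, and conclude via Butcher's simplifying-assumption theorem. One minor correction: for a general SBP operator $\osc$ need not be the Legendre polynomial $\vec{\phi}_p$; that identification is specific to Lobatto collocation. The argument still works because $\vec{\tau}^{q-1} \in \image D$ for $q \le p$, hence $\osc^* \vec{\tau}^{q-1} = 0$.

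There is, however, a genuine gap in part~b). The claim that ``$B(2p)$ and $C(p)$ together imply order $2p$'' is false. Butcher's theorem requires $B(\xi)$, $C(\eta)$, $D(\zeta)$ with $\xi \le 1 + \eta + \zeta$ and $\xi \le 2 + 2\eta$. Taking $\xi = 2p$, $\eta = p$, $\zeta = 0$ gives $2p \le p+1$, which fails for $p \ge 2$. You must establish $D(p-1)$, and this is precisely where the work lies. The paper proves $D(p-1)$ by writing an arbitrary $\vec{u}$ as $D\vec{v} + \alpha\osc$ with $\vec{t}_L^T\vec{v} = 0$, then reducing the condition $A^T M \vec{\tau}^{q-1} = \frac{1}{q} M(\vec{1} - \vec{\tau}^q)$ for $q \le p-1$ via $\Dinv\F D\vec{v} = \vec{v}$, $\F\osc = \vec{0}$, the orthogonality $\osc^T M \vec{\tau}^q = 0$ for $q \le p-1$, and finally the SBP property \eqref{eq:SBP} together with the exact differentiation $D\vec{\tau}^q = q\vec{\tau}^{q-1}$. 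Your remark that $D(\zeta)$ ``may follow automatically from the symmetry of $M$ and the SBP relation'' points in the right direction but is not a proof; the row condition $\sum_i b_i a_{ij} = b_j(1-c_j)$ you mention is only $D(1)$, and the higher-order conditions require the full calculation.
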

The technical proof of Theorem~\ref{thm:SBP-proj-accuracy} is given in
\autoref{sec:proof-of-thm:SBP-proj-accuracy}.

\begin{remark}
  The result on the order of accuracy given in Theorem~\ref{thm:SBP-proj-accuracy}
  may appear counterintuitive at first when looking from the perspective
  of classical finite difference SBP operators, since diagonal norm matrices
  are usually less accurate in this context. Indeed, finite difference SBP
  operators for the first derivative with a diagonal norm matrix have an order
  of accuracy of $2q$ in the interior and $r \le q$ at the boundaries
  \cite{linders2018order}, where usually $r = q$. In contrast, the corresponding
  dense norm operators have an order of accuracy $2q$ in the interior and $2q-1$
  at the boundaries. Hence, the total order of accuracy $p = q$ for diagonal
  mass matrices is smaller than the order of accuracy $p = 2q-1$ for dense norms.
  However, dense norms are not guaranteed to result in the same high order of
  accuracy when used as a quadrature rule. Thus, the total order of accuracy
  can be smaller even if the pointwise accuracy as a derivative operator is
  higher (which basically corresponds to the \emph{stage order} in the context
  of Runge-Kutta methods).
\end{remark}

\section{Numerical Experiments}
\label{sec:numerical-experiments}

Numerical experiments corresponding to the ones in
\cite{lundquist2014sbp} will be conducted. The novel SBP methods
\eqref{eq:SBP-proj} have been implemented in Julia~v1.5
\cite{bezanson2017julia} and Matplotlib \cite{hunter2007matplotlib}
has been used to generate the plots.
The source code for all numerical examples is available online
\cite{ranocha2020classRepro}.
After computing the operators $A = \frac{1}{T} \Dinv \F$ as described in
Section~\ref{sec:operator-construction} and inserting the right-hand sides $f$
of the ODEs considered in the following into the scheme \eqref{eq:SBP-proj},
the resulting linear systems are solved using the backslash operator in Julia.

Numerical experiments are shown only for new SBP methods \eqref{eq:SBP-proj}
based on finite difference SBP operators and not for methods based on
Lobatto quadrature, since the classical Lobatto~IIIA and IIIB schemes
are already well-known in the literature. The diagonal norm finite difference
SBP operators of \cite{mattsson2004summation} use central finite difference
stencils in the interior of the domain and adapted boundary closures to satisfy
the SBP property \eqref{eq:SBP}.
The Butcher coefficients of some of these methods are given in
\autoref{sec:Butcher-FD}.

\subsection{Non-stiff Problem}

The non-stiff test problem
\begin{equation}
\label{eq:non-stiff}
  u'(t) = - u(t), \quad u(0) = 1,
\end{equation}
with analytical solution $u(t) = \exp(-t)$ is solved in the time interval $[0,1]$
using the SBP method \eqref{eq:SBP-proj} with the diagonal norm operators of
\cite{mattsson2004summation}. The errors of the numerical solutions at the final
time are shown in Figure~\ref{fig:non-stiff}. As can be seen, they converge
with an order of accuracy equal to the interior approximation order
of the diagonal norm operators. For the operator with interior order eight,
the error reaches machine precision for $N = 50$ nodes and does not decrease
further.
These results are comparable to the ones obtained by SBP-SAT schemes in
\cite{lundquist2014sbp} and match the order of accuracy of the corresponding
Runge-Kutta methods guaranteed by Theorem~\ref{thm:SBP-proj-accuracy}.

\begin{figure}
\centering
  \includegraphics[width=0.8\textwidth]{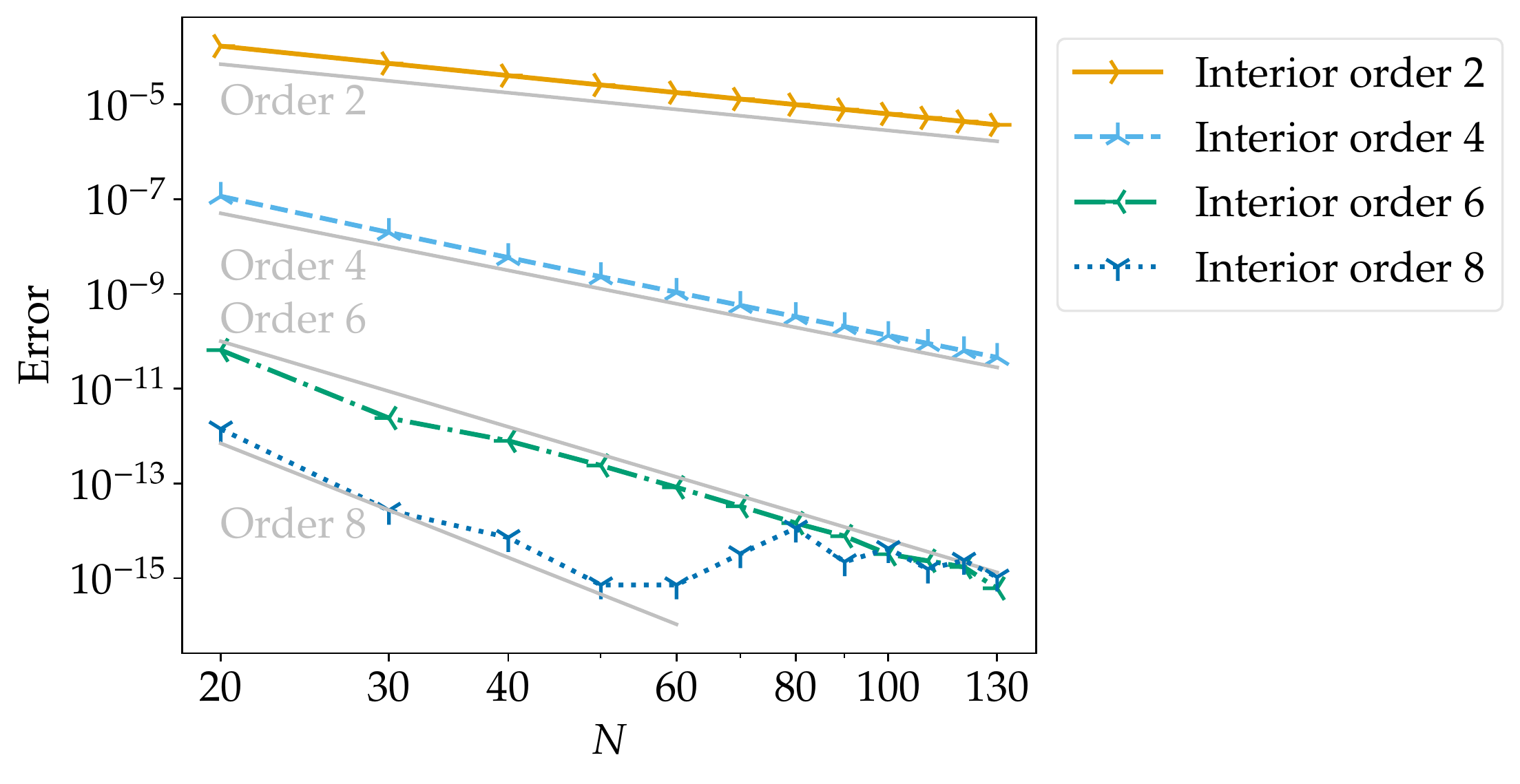}
  \caption{Convergence behavior of the SBP scheme \eqref{eq:SBP-proj} using the
           diagonal norm operators of \cite{mattsson2004summation} for the
           non-stiff test problem \eqref{eq:non-stiff}.}
  \label{fig:non-stiff}
\end{figure}

\subsection{Stiff Problem}

The stiff test problem
\begin{equation}
\label{eq:stiff}
  u'(t) = \lambda \bigl( u(t) - \exp(-t) \bigr) - \exp(-t), \quad u(0) = 1,
\end{equation}
with analytical solution $u(t) = \exp(-t)$ and parameter $\lambda = 1000$ is
solved in the time interval $[0,1]$. The importance of such test problems for
stiff equations has been established in \cite{prothero1974stability}.
Using the diagonal norm operators of \cite{mattsson2004summation} for the
method \eqref{eq:SBP-proj} yields the convergence behavior shown in
Figure~\ref{fig:stiff}.
Again, the results are comparable to the ones obtained by SBP-SAT schemes in
\cite{lundquist2014sbp}. In particular, the order of convergence is reduced
to the approximation order at the boundaries, exactly as for the SBP-SAT
schemes of \cite{nordstrom2013summation}. Such an order reduction for
stiff problems is well-known in the literature on time integration methods,
see \cite{prothero1974stability} and \cite[Chapter~IV.15]{hairer2010solving}.

\begin{figure}
\centering
  \includegraphics[width=0.8\textwidth]{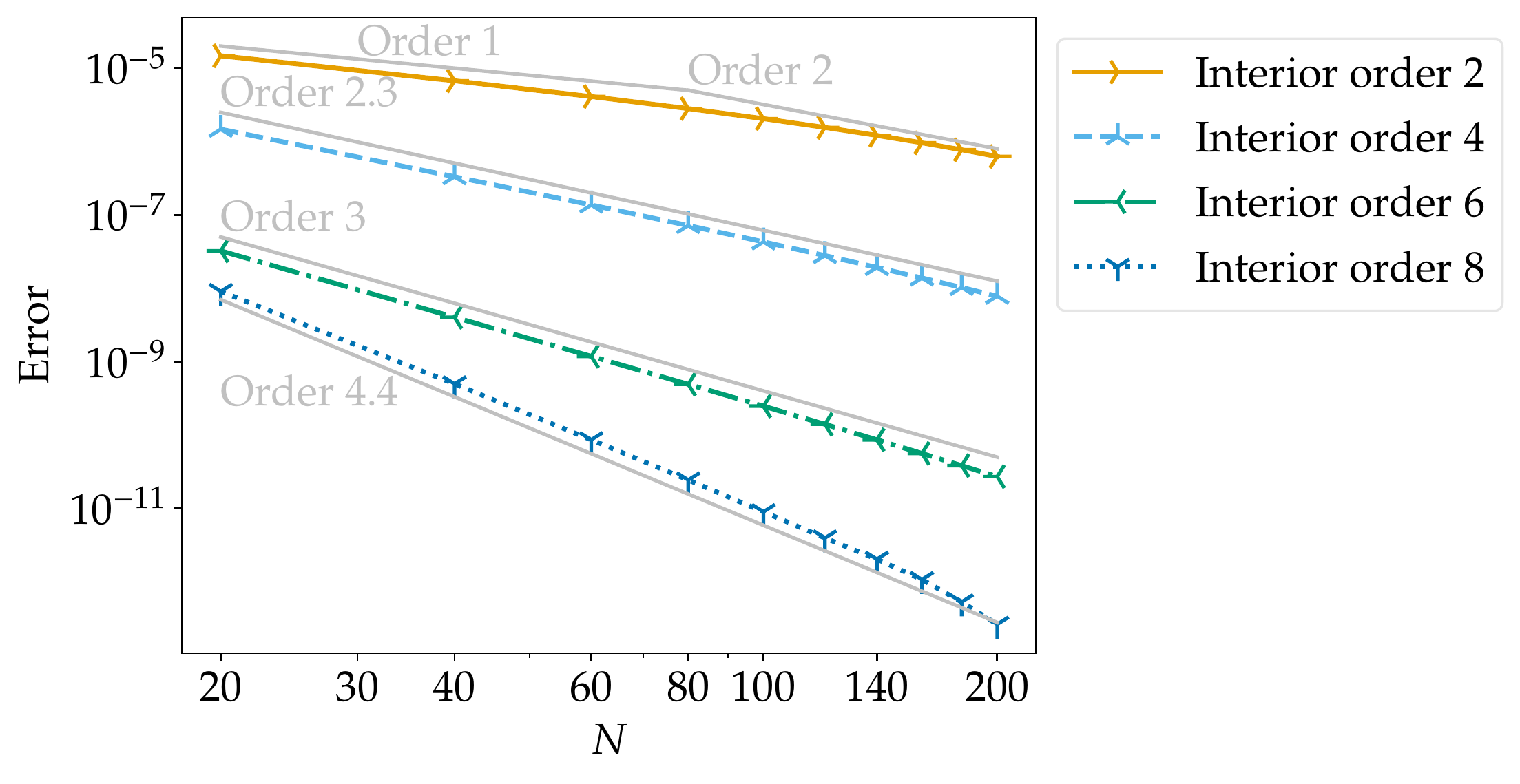}
  \caption{Convergence behavior of the SBP scheme \eqref{eq:SBP-proj} using the
           diagonal norm operators of \cite{mattsson2004summation} for the
           stiff test problem \eqref{eq:stiff}.}
  \label{fig:stiff}
\end{figure}

\section{Summary and Discussion}
\label{sec:summary}

A novel class of $A$ stable summation by parts time integration methods
has been proposed. Instead of using simultaneous approximation terms
to impose the initial condition weakly, the initial condition is imposed
strongly. Similarly to previous SBP time integration methods, the new schemes
can be reformulated as implicit Runge-Kutta methods.

Compared to the SAT approach, some linear and nonlinear stability properties
such as $L$ and $B$ stability are lost in general. On the other hand,
well-known $A$ stable
methods such as the Lobatto~IIIA schemes are included in this new
SBP framework. Additionally, a related SBP time integration method
has been proposed which includes the classical Lobatto~IIIB schemes.

This article provides new insights into the relations of numerical
methods and contributes to the discussion of whether SBP properties are
necessarily involved in numerical schemes for differential equations
which are provably stable.

\subsection{Final Reflections on Obtained Results}

We have concentrated on classical collocation Runge-Kutta methods when looking for
known schemes in the new class of SBP time integration methods, since these
have direct connections to quadrature rules, which are closely connected
to the SBP property \cite{hicken2013summation}. We are not aware of other
classical Runge-Kutta methods that are contained in the new class of SBP
methods proposed and analyzed in this article  besides Lobatto~IIIA and IIIB
schemes. The implicit equations that need to be solved per time step for the
new methods can be easier to solve than the ones occurring in SBP-SAT methods,
e.g.\ since the first stage does not require an implicit solution at all for
some methods (Lemma~\ref{lem:first-row}).
On the other hand, the new classes of methods do not necessarily have the same
kind of nonlinear stability properties as previous SBP-SAT methods. Hence, a
thorough parameter search and comparison of the methods would be necessary for
a detailed comparison, which is beyond the scope of this initial article.

In general, methods constructed using SBP operators often imply certain stability
properties automatically, which are usually more difficult to guarantee when
numerical methods are constructed without these restrictions.
On the other hand, not imposing SBP restrictions can possibly result in more
degrees of freedom which can be used to construct more flexible and possibly
a larger number of numerical methods.
From a practical point of view, the availability of numerical algorithms in
standard software packages and the efficiency of the implementations are also
very important. In this respect, established time integration methods have
definitely many advantages, since they are widespread and considerable efforts
went into the available implementations. Additionally, a practitioner
can chose to make a trade-off between guaranteed stability properties and the
efficiency of schemes that ``just work'' in practice, although only weaker
stability results might be available. For example, linearly implicit time
integration schemes such as Rosenbrock methods can be very efficient for certain
problems.

Having said all that, it is important to note that the process of discretizing
differential equations is filled with pitfalls. Potentially unstable schemes
may lead to results that seem correct but are in fact erroneous. A provably
stable scheme can be seen as a quality stamp.

\appendix

\section{Proof of Theorem~\ref{thm:SBP-proj-accuracy}}
\label{sec:proof-of-thm:SBP-proj-accuracy}

Here, we present the technical proof of Theorem~\ref{thm:SBP-proj-accuracy}.

\begin{proof}[Proof of Theorem~\ref{thm:SBP-proj-accuracy}]
  Consider the classical simplifying assumptions
  \begin{align}
  \label{eq:B-xi-block}
    B(\xi)\colon&&
    \vec{b}^T \vec{c}^{q-1} &= \frac{1}{q},
    && q \in \set{1, \dots, \xi},
    \\
  \label{eq:C-eta-block}
    C(\eta)\colon&&
    A \vec{c}^{q-1} &= \frac{1}{q} \vec{c}^q,
    && q \in \set{1, \dots, \eta},
    \\
  \label{eq:D-zeta-block}
    D(\zeta) \colon&&
    A^T B \vec{c}^{q-1} &= \frac{1}{q} B (\vec{1} - \vec{c}^q),
    && q \in \set{1, \dots, \zeta},
  \end{align}
  where $B = \mathrm{diag}(\vec{b})$ is a diagonal matrix.
  To prove Theorem~\ref{thm:SBP-proj-accuracy}, we will use the following
  result of Butcher \cite{butcher1964implicit} for Runge-Kutta methods.
  \begin{quote}
    If $B(\xi)$, $C(\eta)$, $D(\zeta)$, $\xi \le 1 + \eta + \zeta$, and
    $\xi \le 2 + 2\eta$, then the Runge-Kutta method has an order of
    accuracy at least $\xi$.
  \end{quote}

\begin{enumerate}[label=\alph*)]
  \item General mass matrices $M$
\begin{itemize}
  \item Proving $B(p)$

  The quadrature rule given by the weights $\vec{b} = \frac{1}{T} M \vec{1}$
  of the $p$th order accurate SBP operator is exact for polynomials of degree
  $p-1$ for a general norm matrix $M$ \cite[Theorem~1]{fernandez2014generalized}.
  Hence, $B(p)$ is satisfied.

  \item Proving $C(p)$

  As in the proof of Theorem~\ref{thm:Lobatto-IIIA}, $C(p)$ is satisfied
  by construction of $A = \frac{1}{T} \Dinv \F$, since all polynomials of degree
  $\le p-1$ are integrated exactly by $A$ with vanishing initial value at $t = 0$.

  \item Concluding

  Since only $B(p)$ is satisfied in general, the order of the Runge-Kutta method
  is limited to $p$ and we do not need further conditions on the simplifying
  assumption $D(\zeta)$. Hence, it suffices to consider the empty condition $D(0)$.
  Since $B(p)$, $C(p-1)$, and $D(0)$ are satisfied, the result of Butcher cited
  above implies that the Runge-Kutta method has at least an order of accuracy
  at least~$p$.
\end{itemize}

  \item Diagonal mass matrices $M$
\begin{itemize}
  \item Proving $B(2p)$

  For a diagonal norm matrix $M$, the quadrature rule given by the weights
  $\vec{b} = \frac{1}{T} M \vec{1}$ of the $p$th order accurate SBP operator
  is exact for polynomials of degree $2p-1$
  \cite[Theorem~2]{fernandez2014generalized}. Hence, $B(2p)$ is satisfied.

  \item Proving $C(p)$

  The proof of $C(p)$ is exactly the same as for general mass matrices $M$.

  \item Proving $D(p-1)$

  It suffices to consider a scaled SBP operator such that $T = 1$,
  $\vec{t}_L^T \vec{\tau} = 0$, $\vec{t}_R^T \vec{\tau} = 1$.
  Then, inserting the Runge-Kutta coefficients \eqref{eq:RK-characterization},
  $D(p-1)$ is satisfied if for all $q \in \set{1, \dots, p-1}$
  \begin{equation}
    \F^T \Dinv^T M \vec{\tau}^{q-1} = \frac{1}{q} M (\vec{1} - \vec{\tau}^q).
  \end{equation}
  This equation is satisfied if and only if for all $\vec{u} \in \R^s$
  \begin{equation}
    \vec{u}^T \F^T \Dinv^T M \vec{\tau}^{q-1}
    - \frac{1}{q} \vec{u}^T M (\vec{1} - \vec{\tau}^q)
    = 0.
  \end{equation}
  Every $\vec{u}$ can be written as $\vec{u} = D \vec{v} + \alpha \osc$,
  where $\vec{t}_L^T \vec{v} = 0$ and $\alpha \in \R$, since $D$ is nullspace
  consistent. Hence, it suffices to consider
  \begin{equation}
  \begin{aligned}
    &\quad
    \vec{v}^T D^T \F^T \Dinv^T M \vec{\tau}^{q-1}
    + \alpha \osc^T \F^T \Dinv^T M \vec{\tau}^{q-1}
    - \frac{1}{q} \vec{v}^T D^T M \vec{1}
    - \frac{1}{q} \alpha \osc^T M \vec{1}
    \\
    &\quad
    + \frac{1}{q} \vec{v}^T D^T M \vec{\tau}^q
    + \frac{1}{q} \alpha \osc^T M \vec{\tau}^q
    \\
    &=
    \vec{v}^T M \vec{\tau}^{q-1}
    - \frac{1}{q} \vec{v}^T D^T M \vec{1}
    + \frac{1}{q} \vec{v}^T D^T M \vec{\tau}^q.
  \end{aligned}
  \end{equation}
  Here, we used that $\Dinv \F D \vec{v} = \vec{v}$ by definition of
  $\vec{v} \in V_0 = \kernel \vec{t}_L^T$. The filter $\F$ removes grid
  oscillations, i.e.\ $\F \osc = \vec{0}$. Additionally, grid oscillations are
  orthogonal to the constant $\vec{1}$ for SBP operators that are at least
  first-order accurate and orthogonal to $\vec{\tau}^q$ for $q \le p-1$
  in general (since $D \vec{\tau}^p = p \vec{\tau}^{p-1}$).

  Using the SBP property \eqref{eq:SBP}, the expression above can be rewritten as
  \begin{equation}
  \begin{aligned}
    &\quad
    \vec{v}^T M \vec{\tau}^{q-1}
    - \frac{1}{q} \vec{v}^T D^T M \vec{1}
    + \frac{1}{q} \vec{v}^T D^T M \vec{\tau}^q
    \\
    &=
    \vec{v}^T M \vec{\tau}^{q-1}
    - \frac{1}{q} \vec{t}_R^T \vec{v}
    + \frac{1}{q} \vec{v}^T \vec{t}_R
    - \frac{1}{q} \vec{v}^T M D \vec{\tau}^q
    = 0,
  \end{aligned}
  \end{equation}
  where we used $\vec{t}_L^T \vec{v} = 0$, $\vec{t}_R^T \vec{\tau}^q = 1$,
  and the accuracy of $D$. This proves $D(p-1)$.

  \item Concluding

  Since $B(2p)$, $C(p)$, and $D(p-1)$ are satisfied, the result of Butcher
  cited above guarantees an order of accuracy of at least $2p$.
\end{itemize}
\end{enumerate}
\end{proof}

\begin{remark}
  The simplifying assumptions used in the proof of
  Theorem~\ref{thm:SBP-proj-accuracy} can be satisfied to higher order of
  accuracy. For example, the Lobatto~IIIA methods satisfy $C(p+1)$ instead
  of only $C(p)$, cf.\ the proof of Theorem~\ref{thm:Lobatto-IIIA}.
  In that case, the order of the associated quadrature still limits the order
  of the corresponding Runge-Kutta method to $2p$.

  Similarly, the method based on left Radau quadrature mentioned in
  Example~\ref{rem:other-classical-schemes} has Butcher coefficients
  \begin{equation}
    A =
    \begin{pmatrix}
      0 & 0 \\
      \nicefrac{1}{6} & \nicefrac{1}{2}
    \end{pmatrix},
    \quad
    b =
    \begin{pmatrix}
      \nicefrac{1}{4} \\
      \nicefrac{3}{4}
    \end{pmatrix},
    \quad
    c =
    \begin{pmatrix}
      0 \\
      \nicefrac{2}{3}
    \end{pmatrix}.
  \end{equation}
  Thus, the quadrature condition is satisfied to higher order of accuracy,
  i.e.\ $B(3)$ holds instead of only $B(2)$ for $p = 1$. Nevertheless,
  the Runge-Kutta method is only second-order accurate, i.e. it satisfies the
  order conditions
  \begin{equation}
    b^T \vec{1} = 1,
    \quad
    b^T A \vec{1} = \frac{1}{2},
  \end{equation}
  but violates one of the additional conditions for a third-order accurate method,
  i.e.
  \begin{equation}
    b^T (A \vec{1})^2 = \frac{1}{3},
    \quad
    b^T A^2 \vec{1} = \frac{1}{4} \neq \frac{1}{6},
  \end{equation}
  since neither $C(p+1)$ nor $D(p)$ is satisfied.
\end{remark}

\section{An Example using Gauss Quadrature}
\label{sec:example-gauss}

Here, we follow the derivation of the scheme presented in
Section~\ref{sec:new-schemes} using a more complicated SBP operator that does
not include any boundary node.
Consider the SBP operator of order $p = 2$ induced by classical Gauss-Legendre
quadrature on $[0, T]$, using the nodes
\begin{equation}
  \tau_1 = \frac{T}{10} (5 - \sqrt{15}),
  \quad
  \tau_2 = \frac{T}{2},
  \quad
  \tau_3 = \frac{T}{10} (5 + \sqrt{15}).
\end{equation}
The associated SBP operator exactly differentiating polynomials of degree $p = 2$
is given by
\begin{equation}
  D = \frac{\sqrt{15}}{3 T} \begin{pmatrix}
    -3 & 4 & -1 \\
    -1 & 0 & 1 \\
    1 & -4 & 3
  \end{pmatrix},
  \quad
  M = \frac{T}{18} \begin{pmatrix}
    5 \\
    & 8 \\
    && 5
  \end{pmatrix},
  \quad
  \vec{t}_L = \frac{1}{6} \begin{pmatrix}
    5 + \sqrt{15} \\
    -4 \\
    5 - \sqrt{15}
  \end{pmatrix},
  \quad
  \vec{t}_R = \frac{1}{6} \begin{pmatrix}
    5 - \sqrt{15} \\
    -4 \\
    5 + \sqrt{15}
  \end{pmatrix}.
\end{equation}
Therefore,
\begin{equation}
  D^* = M^{-1} D^T M = \frac{\sqrt{15}}{30 T} \begin{pmatrix}
    -30 & -16 & 10 \\
    25 & 0 & -25 \\
    -10 & 16 & 30
  \end{pmatrix}
\end{equation}
and $\kernel D^* = \spann\{ \osc \}$, where $\osc = (4, -5, 4)^T$
represents the highest resolvable grid oscillation; $\osc$ is orthogonal to
$\image D$, since $\osc^T M D = \vec{0}^T$.
The adjoint of $\osc$ is
\begin{equation}
  \osc^* = \osc^T M = \frac{10 T}{9} (1, -2, 1).
\end{equation}
Thus, the filter/projection operator \eqref{eq:F} is
\begin{equation}
  \F = \I - \frac{\osc \osc^*}{\norm{\osc}_M^2} =
  \frac{1}{18} \begin{pmatrix}
    14 & 8 & -4 \\
    5 & 8 & 5 \\
    -4 & 8 & 14
  \end{pmatrix}.
\end{equation}
The vector spaces $V_0$, $V_1$ associated to the given SBP operator are
\begin{equation}
  V_0 = \set{
    \begin{pmatrix}
      \alpha_1 \\
      \alpha_2 \\
      -(4 + \sqrt{15}) \alpha_1 + 2 (5 + \sqrt{15}) \alpha_2 / 5
    \end{pmatrix} | \alpha_1, \alpha_2 \in \R},
  \quad
  V_1 = \set{
    \begin{pmatrix}
      \beta_1 \\
      \beta_2 \\
      -\beta_1 + 2 \beta_2
    \end{pmatrix} | \beta_1, \beta_2 \in \R}.
\end{equation}
To verify this, observe that both vector spaces are two-dimensional,
$V_0$ is the nullspace of $\vec{t}_L^T$, and $V_1$ is the nullspace of $\osc^*$.

At the level of $\R^3$, the inverse $J$ of $D$ can be represented as
\begin{equation}
  J = \frac{3 T}{24 \sqrt{15}}
  \begin{pmatrix}
    2 & 4 (\sqrt{15} - 3) & -2 \\
    5 & 4 \sqrt{15} & -5 \\
    2 & 4 (\sqrt{15} + 3) & -2
  \end{pmatrix}.
\end{equation}
Indeed, for $\vec{u} = (
  \alpha_1,
  \alpha_2,
  -(4 + \sqrt{15}) \alpha_1 + 2 (5 + \sqrt{15}) \alpha_2 / 5
)^T$, $\alpha_1, \alpha_2 \in \R$,
\begin{equation}
  J D \vec{u}
  =
  \frac{1}{6}
  \begin{pmatrix}
     1 - \sqrt{15} &  4 & -5 + \sqrt{15} \\
    -5 - \sqrt{15} & 10 & -5 + \sqrt{15} \\
    -5 - \sqrt{15} &  4 &  1 + \sqrt{15}
  \end{pmatrix}
  \begin{pmatrix}
    \alpha_1 \\
    \alpha_2 \\
    -(4 + \sqrt{15}) \alpha_1 + 2 (5 + \sqrt{15}) \alpha_2 / 5
  \end{pmatrix}
  =
  \vec{u}
\end{equation}
and similarly for $\vec{u} = (
  \beta_1,
  \beta_2,
  -\beta_1 + 2 \beta_2
)^T$, $\beta_1, \beta_2 \in \R$,
\begin{equation}
  D J \vec{u}
  =
  \frac{1}{2} \begin{pmatrix}
    1 & 2 & -1 \\
    - & 2 & 0 \\
    -1 & 2 & 1
  \end{pmatrix}
  \begin{pmatrix}
    \beta_1 \\
    \beta_2 \\
    -\beta_1 + 2 \beta_2
  \end{pmatrix}
  =
  \vec{u}.
\end{equation}
Hence, $J D = \id_{V_0}$ and $D J = \id_{V_1}$, where $\id_{V_i}$ is the identity
on $V_i$.

Since the filter $\F$ removes the highest grid oscillations and maps a grid function
into the image of the derivative operator $D$, the inverse $J$ can be applied
after $\F$, resulting in
\begin{equation}
  J F =
  \frac{T}{72 \sqrt{15}} \begin{pmatrix}
    -12 + 10 \sqrt{15} & -48 + 16 \sqrt{15} & -48 + 10 \sqrt{15} \\
     45 + 10 \sqrt{15} &       16 \sqrt{15} & -45 + 10 \sqrt{15} \\
     48 + 10 \sqrt{15} &  48 + 16 \sqrt{15} &  12 + 10 \sqrt{15}
  \end{pmatrix}.
\end{equation}
Thus, the Butcher coefficients associated to the new SBP projection method
given by Theorem~\ref{thm:RK-characterization} are
\begin{equation}
\begin{aligned}
  A &= \frac{1}{T} J \F =
  \frac{1}{72 \sqrt{15}} \begin{pmatrix}
    -12 + 10 \sqrt{15} & -48 + 16 \sqrt{15} & -48 + 10 \sqrt{15} \\
     45 + 10 \sqrt{15} &       16 \sqrt{15} & -45 + 10 \sqrt{15} \\
     48 + 10 \sqrt{15} &  48 + 16 \sqrt{15} &  12 + 10 \sqrt{15}
  \end{pmatrix},
  \\
  b &= \frac{1}{T} M \vec{1} =
  \frac{1}{18}
  \begin{pmatrix}
    5 \\
    8 \\
    5
  \end{pmatrix},
  \qquad
  c = \frac{1}{T} (\tau_1, \tau_2, \tau_3)^T =
  \frac{1}{10}
  \begin{pmatrix}
    5 - \sqrt{15} \\
    5 \\
    5 + \sqrt{15}
  \end{pmatrix}.
\end{aligned}
\end{equation}
Note that the coefficients in $A$ are different from those of the classical
Runge-Kutta Gauss-Legendre collocation method (while $b$ and $c$ are the same
by construction), see also Remark~\ref{rem:not-Gauss} below. In particular, this
method is fourth-order accurate while the classical Gauss-Legendre collocation
method with the same number of stages is of order six.

\section{Lobatto~IIIB Schemes}
\label{sec:LobattoIIIB}

Another scheme similar to \eqref{eq:SBP-proj} can be constructed
by considering $-D$ as bijective operator acting on functions that
vanish at the right endpoint. The equivalent of $J$, the inverse of
$D$ mapping $\image D$ to the space of grid functions vanishing at the left
endpoint, in this context is written as $\mDinv$, which is the inverse
of $-D$ mapping $\image (-D)$ to the space of grid functions vanishing at the
right endpoint.
The corresponding matrix $A$ of the Runge-Kutta method becomes
\begin{equation}
\label{eq:SBP-proj-mD}
  A = A^{**} = M^{-1} \bigl( A^* \bigr)^T M,
  \quad
  A^* = \frac{1}{T} \mDinv \left( \I - \frac{\osc \osc^*}{\norm{\osc}_M^2} \right).
\end{equation}
Using $A$ of \eqref{eq:SBP-proj-mD} and $b, c$ as in
\eqref{eq:RK-characterization}, the scheme is defined as the Runge-Kutta
method \eqref{eq:RK-step} with these Butcher coefficients $A, b, c$.

\begin{theorem}
\label{thm:Lobatto-IIIB}
  If the SBP derivative operator $D$ is given by the
  nodal polynomial collocation scheme on Lobatto-Legendre nodes,
  the Runge-Kutta method \eqref{eq:RK-step} with $A$ as in
  \eqref{eq:SBP-proj-mD} and $b, c$ as in \eqref{eq:RK-characterization}
  is the classical Lobatto~IIIB method.
\end{theorem}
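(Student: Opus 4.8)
The plan is to reduce the statement to Theorem~\ref{thm:Lobatto-IIIA} by proving that the coefficient matrix $A$ built from $\mDinv$ is the \emph{symplectic conjugate} of the Lobatto~IIIA matrix $A^{\mathrm{IIIA}} = \frac{1}{T}\Dinv\F$, and then invoking the classical fact that Lobatto~IIIA and Lobatto~IIIB form a symplectic (adjoint) partitioned Runge-Kutta pair \cite{hairer2010solving}. Since $b$ and $c$ coincide with the Lobatto-Legendre weights and nodes by construction, only the identification of $A$ remains.

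The heart of the argument is the operator identity
\begin{equation}
\label{eq:JF-tildeJF}
  \Dinv\F + \mDinv\F = \vec{1}\,\vec{1}^T M,
\end{equation}
valid for every nullspace consistent first derivative SBP operator. To prove it, I would first work on $V_1 = \image D$. For $\vec{f}\in V_1$ the defining relations $D\Dinv\vec{f} = \vec{f}$ and $-D\mDinv\vec{f} = \vec{f}$ yield $D(\Dinv + \mDinv)\vec{f} = \vec{0}$, so nullspace consistency forces $(\Dinv + \mDinv)\vec{f} = \gamma(\vec{f})\,\vec{1}$ for some scalar $\gamma(\vec{f})$. This scalar is pinned down using the SBP property: setting $\vec{w} = \Dinv\vec{f}\in V_0$, the consequence $\vec{1}^T M D\vec{w} = \vec{t}_R^T\vec{w} - \vec{t}_L^T\vec{w}$ of \eqref{eq:SBP} (together with $D\vec{1} = \vec{0}$ and $\vec{1}^T\vec{t}_L = \vec{1}^T\vec{t}_R = 1$) collapses to $\vec{1}^T M\vec{f} = \vec{t}_R^T\Dinv\vec{f} = \gamma(\vec{f})$, because $\Dinv\vec{f}$ vanishes at the left endpoint. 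Finally $\F$ is absorbed using $\osc\perp_M\vec{1}$ (valid since $\vec{1} = D\vec{\tau}\in\image D \perp \kernel D^*\ni\osc$), which gives $\vec{1}^T M\F = \vec{1}^T M$ and hence \eqref{eq:JF-tildeJF} on all of $\R^s$.

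Dividing \eqref{eq:JF-tildeJF} by $T$ and using $b = \frac{1}{T}M\vec{1}$ turns it into $A^* = \vec{1}\,b^T - A^{\mathrm{IIIA}}$, so that
\begin{equation}
\label{eq:A-IIIB-form}
  A = M^{-1}(A^*)^T M = \vec{1}\,b^T - M^{-1}(A^{\mathrm{IIIA}})^T M.
\end{equation}
On Lobatto-Legendre nodes the mass matrix $M$ is diagonal and the endpoints are nodes, hence $\operatorname{diag}(b) = \frac{1}{T}M =: B$, and \eqref{eq:A-IIIB-form} is equivalent to $B A + (A^{\mathrm{IIIA}})^T B = b\,b^T$, i.e.\ $b_i a_{ij} + b_j a^{\mathrm{IIIA}}_{ji} = b_i b_j$. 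This is exactly the relation defining the symplectic conjugate of a method with weights $b$; since the Lobatto weights are positive it determines $A$ uniquely. By Theorem~\ref{thm:Lobatto-IIIA} the matrix $A^{\mathrm{IIIA}}$ is the Lobatto~IIIA matrix, and its symplectic conjugate with the same $b$ and $c$ is precisely Lobatto~IIIB, which completes the identification.

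The main obstacle is the second paragraph, namely the correct evaluation of $\gamma(\vec{f}) = \vec{1}^T M\vec{f}$: this is where the SBP property \eqref{eq:SBP} and the complementary left/right boundary-vanishing of $\Dinv$ and $\mDinv$ enter essentially, and one must track the normalization (the factor $\frac{1}{T}$ and the diagonality of $M$ for Lobatto nodes) so that $M^{-1}(\cdot)^T M$ really matches the $B$-weighted transpose appearing in the symplecticity relation. As an independent confirmation, one could instead verify the simplifying assumption $D(s)$ directly for $A$, mirroring the verification of $C(s)$ in the proof of Theorem~\ref{thm:Lobatto-IIIA}; among methods sharing the Lobatto nodes and weights, $D(s)$ singles out Lobatto~IIIB.
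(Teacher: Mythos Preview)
Your argument is correct and takes a genuinely different route from the paper. The paper verifies the simplifying assumption $D(s)$ directly: writing $D(\zeta)$ as $A^*\vec{c}^{q-1} = \frac{1}{q}(\vec{1}-\vec{c}^q)$, it observes that $A^* = \frac{1}{T}\mDinv\F$ integrates polynomials of degree $\le p-1$ exactly with vanishing value at $t=T$ by construction, and then handles the top-degree case $q=s$ by the same Legendre-polynomial computation as in Theorem~\ref{thm:Lobatto-IIIA}. In other words, the paper does precisely what you mention in your final sentence as an ``independent confirmation''.

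Your approach instead establishes the operator identity $\Dinv\F + \mDinv\F = \vec{1}\,\vec{1}^T M$, which is valid for \emph{any} nullspace consistent first derivative SBP operator, not only on Lobatto nodes. Specialized to a diagonal $M$ this says that the two new SBP schemes \eqref{eq:SBP-proj} and \eqref{eq:SBP-proj-mD} are always $B$-conjugate (symplectic partners) in the Runge-Kutta sense, a structural fact the paper does not record. The Lobatto~IIIB identification then follows from Theorem~\ref{thm:Lobatto-IIIA} together with the classical IIIA/IIIB pairing. The paper's proof is more self-contained (it does not import the symplectic-pair characterization) and reuses the Legendre computation already done; your proof buys a cleaner explanation of \emph{why} the $\mDinv$-scheme lands on Lobatto~IIIB and yields the conjugacy of the two SBP families as a by-product.
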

\begin{proof}
  The Lobatto~IIIB methods are given by the nodes $c$ and weights $b$
  of the Lobatto-Legendre quadrature, just as the SBP method.
  Hence, it remains to prove that the classical condition
  $D(s)$ is satisfied \cite[Section~344]{butcher2016numerical}, where
  \begin{equation}
  \label{eq:D-zeta}
    D(\zeta) \colon
    \qquad \sum_{i=1}^s b_i c_i^{q-1} a_{i,j} = \frac{1}{q} b_j (1 - c_j^q),
    \quad j \in \set{1, \dots, s},\; q \in \set{1, \dots, \zeta}.
  \end{equation}
  In matrix vector notation, this can be written as
  \begin{equation}
    A^T M \vec{c}^{q-1} = \frac{1}{q} M (\vec{1} - \vec{c}^q)
    \iff
    \underbrace{M^{-1} A^T M}_{=A^*} \vec{c}^{q-1} = \frac{1}{q} (\vec{1} - \vec{c}^q),
  \end{equation}
  where the exponentiation $\vec{c}^q$ is performed pointwise.

  In other words, all polynomials of degree $\leq p$ must be integrated exactly
  by $A^*$ with vanishing final value at $t = 1$. By construction of
  $A$, this is satisfied for all polynomials of
  degree $\leq p-1$, and the proof can be continued as the one of
  Theorem~\ref{thm:Lobatto-IIIA}.
\end{proof}

\begin{remark}
  Remarks~\ref{rem:L-B-stability} and \ref{rem:other-classical-schemes}
  hold analogously: Schemes based on \eqref{eq:SBP-proj-mD} are also in
  general not $L$ or $B$ stable and other classical schemes on Gauss, Radau,
  or Lobatto nodes are not included in this class.
\end{remark}

Similarly to Theorem~\ref{thm:SBP-proj-accuracy}, we present some results on
the order of accuracy of the new SBP methods.
Since this class of methods is made to satisfy the simplifying condition
$D(\zeta)$ instead of $C(\eta)$ and stronger results on $C(\eta)$ are necessary
to apply the results of Butcher \cite{butcher1964implicit}, we concentrate on
diagonal norms.

\begin{theorem}
\label{thm:SBP-proj-mD-accuracy}
  For nullspace consistent SBP operators that are $p$th order accurate with
  $p \ge 1$ and a diagonal norm matrix $M$, the Runge-Kutta method associated
  to the SBP time integration scheme \eqref{eq:SBP-proj-mD} has an order of
  accuracy of at least $2p$.
\end{theorem}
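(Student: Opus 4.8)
The plan is to mirror the proof of Theorem~\ref{thm:SBP-proj-accuracy}b). The scheme \eqref{eq:SBP-proj-mD} is obtained from the $M$-adjoint relation $A = M^{-1} (A^*)^T M$ with $A^* = \frac{1}{T} \mDinv \F$, and passing to the $M$-adjoint interchanges the roles played by the simplifying assumptions $C(\eta)$ and $D(\zeta)$. Accordingly, whereas the Lobatto~IIIA-type scheme satisfied $C(p)$ by construction and needed the technical verification of $D(p-1)$, here $D(p)$ holds by construction and $C(p-1)$ is the condition that must be checked. I would again invoke Butcher's criterion quoted in \autoref{sec:proof-of-thm:SBP-proj-accuracy}, now with $\xi = 2p$, $\eta = p-1$, and $\zeta = p$; since $1 + \eta + \zeta = 2 + 2\eta = 2p = \xi$, establishing $B(2p)$, $C(p-1)$, and $D(p)$ yields an order of at least $2p$.

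First I would dispose of the two conditions that come for free. The weights $b = \frac{1}{T} M \vec{1}$ are unchanged from \eqref{eq:RK-characterization}, so for a diagonal norm matrix $M$ the quadrature is exact to degree $2p-1$ and $B(2p)$ holds exactly as in Theorem~\ref{thm:SBP-proj-accuracy}b) by \cite[Theorem~2]{fernandez2014generalized}. For $D(p)$, I would write it in the matrix form $A^* \vec{c}^{q-1} = \frac{1}{q}(\vec{1} - \vec{c}^q)$ used in the proof of Theorem~\ref{thm:Lobatto-IIIB}; this asserts that $A^* = \frac{1}{T} \mDinv \F$ integrates every polynomial of degree $\le p-1$ exactly with vanishing \emph{final} value at $t = 1$, which is immediate from the definition of $\mDinv$ as the inverse of $-D$ onto grid functions that vanish at the right endpoint, together with $\F \osc = \vec{0}$ and the fact that such polynomials lie in $\image D$.

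The technical heart is $C(p-1)$, which plays the role that $D(p-1)$ played in part b). After scaling so that $T = 1$, $\vec{t}_L^T \vec{\tau} = 0$, and $\vec{t}_R^T \vec{\tau} = 1$, inserting $A = M^{-1}(A^*)^T M$ turns $C(p-1)$ into $\F^T \mDinv^T M \vec{\tau}^{q-1} = \frac{1}{q} M \vec{\tau}^q$ for all $q \in \set{1, \dots, p-1}$. I would test this against an arbitrary $\vec{u} \in \R^s$ and decompose $\vec{u} = -D \vec{v} + \alpha \osc$ with $\vec{t}_R^T \vec{v} = 0$ and $\alpha \in \R$, which is possible since $D$ is nullspace consistent and $-D$ maps the right-vanishing grid functions bijectively onto $\image D$. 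Using $\mDinv \F (-D) \vec{v} = \vec{v}$, $\F \osc = \vec{0}$, and the orthogonality of $\osc$ to $\vec{1}$ and to $\vec{\tau}^q$ for $q \le p-1$ (since $D \vec{\tau}^{q+1} = (q+1) \vec{\tau}^q$), the oscillation terms vanish; the SBP property \eqref{eq:SBP} then produces the two boundary contributions $\vec{t}_R \vec{t}_R^T$ and $\vec{t}_L \vec{t}_L^T$, of which the first vanishes because $\vec{t}_R^T \vec{v} = 0$ and the second because $\vec{t}_L^T \vec{\tau}^q = 0$. This is precisely the computation for $D(p-1)$ with the roles of $\vec{t}_L$ and $\vec{t}_R$ exchanged and an extra sign from $-D$.

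The main obstacle is exactly this last step. Although the adjoint duality makes $C(p-1)$ the formal transpose of the $D(p-1)$ argument, one must carefully track the sign introduced by $-D$, verify the identity $\mDinv \F (-D) = \id$ on the right-vanishing subspace, and confirm that the boundary terms generated by \eqref{eq:SBP} now localize at $t = 0$ rather than $t = 1$. Once $B(2p)$, $C(p-1)$, and $D(p)$ are in hand, Butcher's criterion gives the asserted order $2p$.
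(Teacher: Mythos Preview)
Your proposal is correct and follows essentially the same approach as the paper: verify $B(2p)$ from the diagonal-norm quadrature accuracy, obtain $D(p)$ by construction from the definition of $\mDinv$, and reduce $C(p-1)$ via the decomposition of an arbitrary test vector into an image-of-$D$ part plus an oscillation, eliminating the boundary terms with $\vec{t}_R^T \vec{v} = 0$ and $\vec{t}_L^T \vec{\tau}^q = 0$. The only cosmetic difference is that you write the decomposition as $\vec{u} = -D\vec{v} + \alpha\osc$ whereas the paper uses $\vec{u} = D\vec{v} + \alpha\osc$ and picks up the sign later through $\mDinv \F D \vec{v} = -\vec{v}$; this is immaterial.
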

\begin{proof}
  This proof is very similar to the one of Theorem~\ref{thm:SBP-proj-accuracy}.
  By construction, the simplifying conditions $B(2p)$ and $D(p)$ are satisfied.
  Hence, the order of accuracy is at least $2p$ if $C(p-1)$ is satisfied.

  Again, suffices to consider a scaled SBP operator such that $T = 1$,
  $\vec{t}_L^T \vec{\tau} = 0$, $\vec{t}_R^T \vec{\tau} = 1$.
  Then, inserting the Runge-Kutta coefficients \eqref{eq:SBP-proj-mD},
  $C(p-1)$ is satisfied if for all $q \in \set{1, \dots, p-1}$
  \begin{equation}
    M^{-1} \F^T \mDinv M \vec{\tau}^{q-1} = \frac{1}{q} \vec{\tau}^q.
  \end{equation}
  This equation is satisfied if and only if for all $\vec{u} \in \R^s$
  \begin{equation}
    \vec{u}^T \F^T \mDinv^T M \vec{\tau}^{q-1}
    - \frac{1}{q} \vec{u}^T M \vec{\tau}^q
    = 0.
  \end{equation}
  Every $\vec{u}$ can be written as $\vec{u} = D \vec{v} + \alpha \osc$,
  where $\vec{t}_R^T \vec{v} = 0$ and $\alpha \in \R$, since $D$ is nullspace
  consistent. Hence, it suffices to consider
  \begin{equation}
  \begin{aligned}
    &\quad
    \vec{v}^T D^T \F^T \mDinv^T M \vec{\tau}^{q-1}
    \alpha \osc^T \F^T \mDinv^T M \vec{\tau}^{q-1}
    - \frac{1}{q} \vec{v}^T D^T M \vec{\tau}^q
    - \frac{1}{q} \alpha \osc^T M \vec{\tau}^q
    \\
    &=
    - \vec{v}^T M \vec{\tau}^{q-1}
    - \frac{1}{q} \vec{v}^T D^T M \vec{\tau}^q
    \\
    &=
    - \vec{v}^T M \vec{\tau}^{q-1}
    - \frac{1}{q} \vec{v}^T (\vec{t}_R \vec{t}_R^T - \vec{t}_L \vec{t}_L^T - M D) \vec{\tau}^q
    \\
    &=
    - \vec{v}^T M \vec{\tau}^{q-1}
    _ \frac{1}{q} \vec{v}^T - M D \vec{\tau}^q
    -
    0.
  \end{aligned}
  \end{equation}
  This proves $C(p-1)$.
\end{proof}

\begin{remark}
\label{rem:not-Gauss}
  Up to now, it is unclear whether the classical collocation Runge-Kutta
  schemes on Gauss nodes can be constructed as special members of a family
  of schemes that can be formulated for (more) general SBP operators. The
  problem seems to be that SBP schemes rely on differentiation, while the
  conditions $C(s)$ and $D(s)$ describing the Runge-Kutta methods rely on
  integration. Thus, special compatibility conditions as in the proof of
  Theorem~\ref{thm:Lobatto-IIIA} are necessary. To the authors' knowledge,
  no insights in this direction have been achieved for Gauss methods.
\end{remark}

\section{Butcher Coefficients of some Finite Difference SBP Methods}
\label{sec:Butcher-FD}

Here, we provide the Butcher coefficients given by
Theorem~\ref{thm:RK-characterization} for some of the finite difference SBP
methods used in the numerical experiments in Section~\ref{sec:numerical-experiments}.
\begin{itemize}
  \item
  Interior order 2, $N = 3$ nodes
  \begin{equation}
    A = \begin{pmatrix}
      0 & 0 & 0 \\
      \nicefrac{3}{8} & \nicefrac{1}{4} & \nicefrac{-1}{8} \\
      \nicefrac{1}{4} & \nicefrac{1}{2} & \nicefrac{1}{4} \\
    \end{pmatrix},
    \quad
    b = \begin{pmatrix}
      \nicefrac{1}{4} \\
      \nicefrac{1}{2} \\
      \nicefrac{1}{4} \\
    \end{pmatrix},
    \quad
    c = \begin{pmatrix}
      0 \\
      \nicefrac{1}{2} \\
      1 \\
    \end{pmatrix}
  \end{equation}

  \item
  Interior order 2, $N = 9$ nodes
  \begin{equation}
    A = \begin{pmatrix}
      0 & 0 & 0 & 0 & 0 & 0 & 0 & 0 & 0 \\
      \nicefrac{15}{128} & \nicefrac{1}{64} & \nicefrac{-1}{64} & \nicefrac{1}{64} & \nicefrac{-1}{64} & \nicefrac{1}{64} & \nicefrac{-1}{64} & \nicefrac{1}{64} & \nicefrac{-1}{128} \\
      \nicefrac{1}{64} & \nicefrac{7}{32} & \nicefrac{1}{32} & \nicefrac{-1}{32} & \nicefrac{1}{32} & \nicefrac{-1}{32} & \nicefrac{1}{32} & \nicefrac{-1}{32} & \nicefrac{1}{64} \\
      \nicefrac{13}{128} & \nicefrac{3}{64} & \nicefrac{13}{64} & \nicefrac{3}{64} & \nicefrac{-3}{64} & \nicefrac{3}{64} & \nicefrac{-3}{64} & \nicefrac{3}{64} & \nicefrac{-3}{128} \\
      \nicefrac{1}{32} & \nicefrac{3}{16} & \nicefrac{1}{16} & \nicefrac{3}{16} & \nicefrac{1}{16} & \nicefrac{-1}{16} & \nicefrac{1}{16} & \nicefrac{-1}{16} & \nicefrac{1}{32} \\
      \nicefrac{11}{128} & \nicefrac{5}{64} & \nicefrac{11}{64} & \nicefrac{5}{64} & \nicefrac{11}{64} & \nicefrac{5}{64} & \nicefrac{-5}{64} & \nicefrac{5}{64} & \nicefrac{-5}{128} \\
      \nicefrac{3}{64} & \nicefrac{5}{32} & \nicefrac{3}{32} & \nicefrac{5}{32} & \nicefrac{3}{32} & \nicefrac{5}{32} & \nicefrac{3}{32} & \nicefrac{-3}{32} & \nicefrac{3}{64} \\
      \nicefrac{9}{128} & \nicefrac{7}{64} & \nicefrac{9}{64} & \nicefrac{7}{64} & \nicefrac{9}{64} & \nicefrac{7}{64} & \nicefrac{9}{64} & \nicefrac{7}{64} & \nicefrac{-7}{128} \\
      \nicefrac{1}{16} & \nicefrac{1}{8} & \nicefrac{1}{8} & \nicefrac{1}{8} & \nicefrac{1}{8} & \nicefrac{1}{8} & \nicefrac{1}{8} & \nicefrac{1}{8} & \nicefrac{1}{16} \\
    \end{pmatrix},
    \quad
    b = \begin{pmatrix}
      \nicefrac{1}{16} \\
      \nicefrac{1}{8} \\
      \nicefrac{1}{8} \\
      \nicefrac{1}{8} \\
      \nicefrac{1}{8} \\
      \nicefrac{1}{8} \\
      \nicefrac{1}{8} \\
      \nicefrac{1}{8} \\
      \nicefrac{1}{16} \\
    \end{pmatrix},
    \quad
    c = \begin{pmatrix}
      0 \\
      \nicefrac{1}{8} \\
      \nicefrac{1}{4} \\
      \nicefrac{3}{8} \\
      \nicefrac{1}{2} \\
      \nicefrac{5}{8} \\
      \nicefrac{3}{4} \\
      \nicefrac{7}{8} \\
      1 \\
    \end{pmatrix}
  \end{equation}

  \item
  Interior order 4, $N = 9$ nodes
  \begin{equation}
    A = \begin{pmatrix}
      0 & 0 & 0 & 0 & 0 & 0 & 0 & 0 & 0 \\
      \nicefrac{13}{180} & \nicefrac{18}{385} & \nicefrac{1}{2044} & \nicefrac{2}{211} & \nicefrac{-3}{371} & \nicefrac{1}{124} & \nicefrac{-3}{317} & \nicefrac{2}{215} & \nicefrac{-1}{267} \\
      \nicefrac{5}{434} & \nicefrac{60}{271} & \nicefrac{3}{103} & \nicefrac{-7}{283} & \nicefrac{3}{118} & \nicefrac{-7}{283} & \nicefrac{3}{103} & \nicefrac{-20}{699} & \nicefrac{5}{434} \\
      \nicefrac{17}{265} & \nicefrac{37}{361} & \nicefrac{37}{228} & \nicefrac{13}{230} & \nicefrac{-4}{157} & \nicefrac{7}{244} & \nicefrac{-7}{211} & \nicefrac{3}{92} & \nicefrac{-4}{305} \\
      \nicefrac{11}{408} & \nicefrac{11}{56} & \nicefrac{47}{689} & \nicefrac{99}{614} & \nicefrac{1}{16} & \nicefrac{-11}{327} & \nicefrac{13}{297} & \nicefrac{-8}{187} & \nicefrac{5}{289} \\
      \nicefrac{7}{122} & \nicefrac{42}{347} & \nicefrac{109}{751} & \nicefrac{37}{374} & \nicefrac{31}{206} & \nicefrac{29}{408} & \nicefrac{-8}{159} & \nicefrac{20}{391} & \nicefrac{-7}{352} \\
      \nicefrac{15}{458} & \nicefrac{39}{214} & \nicefrac{29}{350} & \nicefrac{39}{256} & \nicefrac{24}{241} & \nicefrac{39}{256} & \nicefrac{29}{350} & \nicefrac{-21}{310} & \nicefrac{15}{458} \\
      \nicefrac{23}{479} & \nicefrac{55}{381} & \nicefrac{17}{140} & \nicefrac{41}{343} & \nicefrac{35}{263} & \nicefrac{43}{364} & \nicefrac{32}{287} & \nicefrac{31}{290} & \nicefrac{-9}{322} \\
      \nicefrac{12}{271} & \nicefrac{57}{371} & \nicefrac{43}{384} & \nicefrac{43}{337} & \nicefrac{1}{8} & \nicefrac{43}{337} & \nicefrac{43}{384} & \nicefrac{57}{371} & \nicefrac{12}{271} \\
    \end{pmatrix},
    \;
    b = \begin{pmatrix}
      \nicefrac{17}{384} \\
      \nicefrac{59}{384} \\
      \nicefrac{43}{384} \\
      \nicefrac{49}{384} \\
      \nicefrac{1}{8} \\
      \nicefrac{49}{384} \\
      \nicefrac{43}{384} \\
      \nicefrac{59}{384} \\
      \nicefrac{17}{384} \\
    \end{pmatrix},
    \;
    c = \begin{pmatrix}
      0 \\
      \nicefrac{1}{8} \\
      \nicefrac{1}{4} \\
      \nicefrac{3}{8} \\
      \nicefrac{1}{2} \\
      \nicefrac{5}{8} \\
      \nicefrac{3}{4} \\
      \nicefrac{7}{8} \\
      1 \\
    \end{pmatrix}
  \end{equation}
\end{itemize}

\section*{Acknowledgments}

Research reported in this publication was supported by the
King Abdullah University of Science and Technology (KAUST).
Jan Nordström was supported by Vetenskapsrådet, Sweden grant 2018-05084 VR
and by the Swedish e-Science Research Center (SeRC) through project ABL in SESSI.

\printbibliography

\end{document}